\numberwithin{equation}{section}
\def\Der{{\rm Der}}
\def\Inn{{\rm Inn}}
\def\Aut{{\rm Aut}}
\def\ad{\mbox{ad}}
\def\vs{\vspace*}
\def\cl{\centerline}
\def\cl{\centerline}
\def\C{\mathbb{C}}
\def\Z{\mathbb{Z}}
\def\0{\overline{0}}
\def\1{\overline{1}}
\def\L{{\mathfrak{L}^s_\lambda}}
\newtheorem{theo}{Theorem}[section]
\newtheorem{lemm}[theo]{Lemma}
\begin{document}

\cl{\bf{Derivation algebras and automorphism groups}}
\cl{{\bf{of a class of deformative super $W$-algebras $W^s_\lambda(2,2)$}}
\footnote{Supported by NSF grants 11101056, 11271056, 11431010, 11371278 of China.}}

\cl{Hao Wang$^{1)}$, Huanxia Fa$^{2)}$, Junbo Li$^{2)}$}

\cl{\small $^{1)}$Wu Wen-Tsun Key Laboratory of Mathematics and School of Mathematical Sciences,}
\cl{\small University of Science and Technology of China, Hefei 230026, China}

\cl{\small $^{2)}$School of Mathematics and Statistics, Changshu Institute of Technology, Changshu 215500, China}

\vs{6pt}

\noindent{\bf{Abstract.}}
In this paper, the derivation algebras and automorphism groups of a class of deformative super $W$-algebras $W_{\lambda}(2,2)$ are determined.

\noindent{{\bf Key words:}}
automorphisms, deformative super $W$-algebras, derivations.

\noindent{\it{MR(2000) Subject Classification}: }\vs{18pt} 17B05, 17B40, 17B65, 17B68.

\section{Introduction}

\indent\ \ \ \ \,\,It is well known that the $W$-algebra $W(2,2)$ plays important rolls in many areas of theoretical physics and mathematics which was introduced in \cite{ZD} for the study of vertex operator algebras generated by weight $2$ vectors. It has a basis $\{L_{m}, I_{m}\,|\,m\in\Z\}$ as a vector space over the complex field  $\C$, with the non-trivial Lie brackets $[L_{m},L_{n}]=(m-n)L_{m+n}$ and $[L_{m},I_{n}]=(m-n)I_{m+n}$. Structures and representations of $W(2,2)$ are extensively investigated in the known references, such as  \cite{CL}, \cite{GJP},  \cite{JP}, \cite{LGZ}, \cite{LSX}, \cite{ZD}, \cite{ZT} and the corresponding references.

Some Lie superalgebras whose even parts are the $W(2,2)$ Lie algebras are constructed in \cite{WCB}. The {\it deformative super $W$-algebras $W^s_\lambda(2,2)$} (denoted by $\L$ just for convenience in the following) investigated in this paper own the same even parts: the $W$-algebra $W(2,2)$ and are infinite-dimensional Lie superalgebras over the complex field $\C$ with the basis $\{L_m,\,I_m,\,G_p,\,H_p\,|\,m\in\Z,\,p\in s\!+\!\Z\}$, admitting the following non-vanishing super brackets:
\begin{eqnarray*}
\begin{array}{lll}
&[L_m,L_n]=(m-n)L_{m+n},&[L_m,I_n]=(m-n)I_{m+n},\vs{6pt}\\
&[L_m,H_p]=(\frac{m}{2}-p)H_{p+m},&[G_p,G_q]=I_{p+q},\vs{6pt}\\
&[L_m,G_p]=(\frac{m}{2}-p)G_{p+m}+\lambda(m+1)H_{m+p},&[I_m,G_p]=(m-2p)H_{p+m},
\end{array}
\end{eqnarray*}
where $s=0$ or $\frac{1}{2}$ and $\lambda\in\C$.

It is known that derivations and automorphisms play important roles in the investigation of structures and representations of the relevant Lie algebras or superalgebras. Many references (i.e., \cite{BM},\cite{DZ},\cite{F01},\cite{F02},\cite{FG},\cite{GJP},\cite{SJ},\cite{SZ},\cite{WLX}) have focused on derivations and automorphisms of different Lie algebras or superalgebras backgrounds. In this article, the corresponding derivation algebras and automorphism groups are respectively determined in Theorems \ref{thm-derivation algebra} and \ref{theorem-automorphism}.

We briefly recall some notations on Lie superalgebras. Let $L=L_{\0}\oplus L_{\1}$ be a vector space over $\C$, and all elements below are assumed to be $\Z_2$-homogeneous in this subsection, where $\Z_2=\{\overline{0},\overline{1}\}$.  For $x\in L$, we always denote $[x]\in\Z_2$ to be its \textit{parity}, i.e., $x\in L_{[x]}$. An $\Z_2$-homogeneous linear map $d: L\longrightarrow L$ satisfying $d(L_{i})\in L_{i+[d]}$ for $i\in\Z_2$ and
\begin{eqnarray}\label{eq-derivation}
d([x,y]) = [d(x),y]+(-1)^{([d][x])}[y,d(x)],\ \ x,\ y\in L,
\end{eqnarray}
is called a homogeneous derivation of parity $[d]\!\in\!\Z_2$. The derivation $d$ is called even if $[d]=\0$ and odd if $[d]=\1$. Denote by $\Der_{p}(L)$ the set of homogeneous derivations of parity $p$ ($p\in\{\0,\1\}$). Denote $\Der(L) =\Der_{\0}(L)\oplus\Der_{\1}(L)$ the set of derivations from $L$ to $L$.

Denote $\Inn(L) =\Inn_{\0}(L)\oplus\Inn_{\1}(L)$ the set of all inner derivations from $L$ to $L$, where $\Inn_{p}(L)$ is the set of homogeneous inner derivations of parity $p$ consisting of $\ad x$ $(x\in L_{p})$ defined by:
\begin{eqnarray}\label{eq-innerderivation}
 \ad x(y) = [x,y],\ \ \ y\in L,\ [x] = p.
\end{eqnarray}

Denote ${\Z_s}=\Z\cup(s+\Z)$, i.e., ${\Z_s}=\Z$ for $s=0$ and $\frac{1}{2}\Z$ for $s=\frac{1}{2}$. A Lie superalgebra $L$ is called \textit{${\Z_s}$-graded} if $L=\oplus_{r\in{\Z_s}}L_{r}$ and $[L_{p},L_{q}]\subset L_{p+q}$. It is easy to see that the algebras $\L$ considered here are ${\Z_s}$-graded with ${(\L)}_{k}=\C L_{k}\oplus\C I_{k}\oplus\C G_{k}\oplus\C H_{k}$ $(k\in\Z)$ for the case $s=0$ and ${(\L)}_{k} =\C L_{k}\oplus\C I_{k}$, ${(\L)}_{\frac{1}{2}+k} =\C G_{\frac{1}{2}+k}\oplus\C H_{\frac{1}{2}+k}$ $(k\in\Z)$ for the case $s=\frac{1}{2}$.

Denote
\begin{eqnarray}\label{def-graded-derivation}
\Der_{r}({\L})=\{d\in\Der({\L})|d(({\L})_{p})
\subset{(\L)}_{p+r},\ \forall\,\,p\in{\Z_s}\},
\end{eqnarray}
in which $r\in{\Z_s}$. An element $d\in\Der_{r}({\L})$ is called \textit{a homogeneous derivation of degree $r$}, usually denoted as $d_{r}$. Similarly, one can give the definition of $\Inn_{r}({\L})$ and \textit{a homogeneous inner derivation of degree $r$}. Then $\Der({\L})\!=\!\mbox{$\prod\limits_{r\in {\Z_s}}$}\Der_{r}({\L})$. It should be noticed that for any $d=\sum_{r\in {\Z_s}}d_{r}\in\Der({\L})$, the formal sum on the right hand side may be infinite while for any $x\in {\L}$, $d(x)=\sum_{r\in {\Z_s}}d_{r}(x)$, the nonzero summands on the right hand side must be finite since ${\L}$ are vector spaces.

A bijective linear map $\varphi$: ${\L}\rightarrow{\L}$ is called an automorphism of ${\L}$ if it satisfies:
\begin{eqnarray}\label{eq-automorphism}
\varphi([x,y])=[\varphi(x),\varphi(y)],
\ \varphi({(\L)}_{\overline{i}})\subset{(\L)}_{\overline{i}}.
\end{eqnarray}
for any $x, y \in \L, i\in\{0, 1\}$.
Denote by $\Aut({\L})$ the set of all automorphisms of ${\L}$. It is easy to verify $\Aut({\L})$ is a group with the multiplication defined by the composition of linear maps.

\section{Derivation algebra}

\indent\ \ \ \ \ \
The derivation algebra of ${\L}$ can be formulated as the following theorem.
\begin{theo}\label{thm-derivation algebra}
$\Der({\L})=\Inn({\L})\oplus\C\partial_{2}\oplus\C\partial_{4}\oplus\delta_{\lambda,0}\C\partial_{1}\oplus\delta_{\lambda,0}\C\partial_{3}$,
where $\partial_{1}$, $\partial_{2}$ are defined by \eqref{partial{1},{2}} and $\partial_{3}$, $\partial_{4}$ are defined by \eqref{partial34}.
\end{theo}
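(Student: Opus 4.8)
\vs{6pt}\noindent{\bf Proof proposal.}\ \ The plan is to exploit the $\Z_s$-grading throughout. Since $\Der(\L)=\prod_{r\in\Z_s}\Der_r(\L)$, since $\Inn(\L)$ splits the same way, and since $\L$ has trivial center, it suffices to (i) prove $\Der_r(\L)=\Inn_r(\L)$ for every $r\neq0$ and (ii) determine $\Der_0(\L)$ exactly; passing from homogeneous to arbitrary derivations is harmless, because for $d=\sum_r d_r$ the element $d(L_0)\in\L$ is a finite sum, so (once (i) is available) all but finitely many of the parts $d_r$ with $r\neq0$ combine into a single inner derivation. Within each degree I would split $d_r$ into its even and odd parts and apply \eqref{eq-derivation} to each separately; I would also first check directly that the maps $\partial_1,\partial_2,\partial_3,\partial_4$ (with $\partial_1,\partial_3$ present only when $\lambda=0$) satisfy \eqref{eq-derivation} and are not inner, so that the substance of the theorem is the nonexistence of further outer derivations.

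\emph{The nonzero degrees.} For a fixed $r\neq0$ and a homogeneous $d_r$ of either parity, the key point is that $\ad L_0$ maps $(\L)_r$ into itself and acts there as $-r\,\mathrm{id}$ plus the nilpotent operator induced by the coupling $[L_0,G_r]=-rG_r+\lambda H_r$, so $\ad L_0$ is invertible on $(\L)_r$. Hence one can pick $u\in(\L)_r$ of the parity of $d_r$ with $[u,L_0]=d_r(L_0)$ and put $d':=d_r-\ad u$, so that $d'(L_0)=0$. Feeding $[L_0,L_m]=-mL_m$, $[L_0,I_m]=-mI_m$, $[L_0,H_p]=-pH_p$ and $[L_0,G_p]=-pG_p+\lambda H_p$ into \eqref{eq-derivation} (with the even element $L_0$) and using $d'(L_0)=0$ then shows, one family at a time, that $d'$ kills $L_m$, then $I_m$, then $H_p$, and finally $G_p$ once the $H$-values are known, because at each step $d'$ of the element in question is confined to the kernel of an operator that is invertible on the relevant graded subspace. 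Hence $d'=0$, i.e.\ $d_r=\ad u\in\Inn_r(\L)$.

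\emph{Degree zero.} This is the heart of the proof. A degree-zero even derivation is encoded by scalar functions through $d(L_m)=f(m)L_m+g(m)I_m$, $d(I_m)=h(m)L_m+k(m)I_m$, $d(G_p)=\phi(p)G_p+\psi(p)H_p$, $d(H_p)=\chi(p)G_p+\omega(p)H_p$, while a degree-zero odd derivation has the analogous parity-reversed shape and vanishes identically when $s=\frac{1}{2}$ for parity reasons. I would substitute these into \eqref{eq-derivation} for each of the six defining families of brackets and solve the resulting functional equations in order: the relation from $[L_m,L_n]$ forces $f$ and $g$ to be linear, so after subtracting a suitable combination of $\ad L_0$ and $\ad I_0$ (and, in the $s=0$ odd case, of $\ad G_0$ and $\ad H_0$) one may assume $d(L_m)=0$; the relation from $[L_m,I_n]$ then makes $h$ and $k$ constant; the relations from $[G_p,G_q]$ and $[I_m,G_p]$ pin $\phi,\chi,\omega$ down to scalars tied to a single free constant; and finally the relation from $[L_m,G_p]$, whose term $\lambda(m+1)H_{m+p}$ is decisive, forces $\psi$ to be constant and, when $\lambda\neq0$, kills the remaining free constant, whereas for $\lambda=0$ that constant survives. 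Reading off the surviving parameter family and removing its inner part leaves exactly $\C\partial_2\oplus\C\partial_4$ when $\lambda\neq0$, and the additional summands $\C\partial_1\oplus\C\partial_3$ when $\lambda=0$.

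Assembling the two cases gives $\Der(\L)=\Inn(\L)\oplus\C\partial_2\oplus\C\partial_4\oplus\delta_{\lambda,0}\C\partial_1\oplus\delta_{\lambda,0}\C\partial_3$; the sum is direct because any inner derivation of degree zero lies in $\C\ad L_0\oplus\C\ad I_0$ (together with the odd inner part when $s=0$), and evaluating the $\partial_i$ on a few low-degree basis vectors shows that no nonzero combination of them has this form. I expect the main obstacle to be the degree-zero functional-equation bookkeeping, especially organizing the split according to whether $\lambda=0$ and tracking the nilpotent $G$-$H$ coupling $\lambda(m+1)H_{m+p}$, which is exactly what obstructs semisimplicity of $\ad L_0$ and what makes the final answer carry the factors $\delta_{\lambda,0}$.
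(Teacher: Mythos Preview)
Your proposal is correct and follows the paper's overall architecture: reduce to $\Z_s$-homogeneous pieces, handle $r\neq0$ and $r=0$ separately, and in degree zero solve the functional equations coming from the defining brackets to isolate the outer summands. Your degree-zero analysis (writing $d(L_m)=f(m)L_m+g(m)I_m$, etc., showing $f,g$ are linear, subtracting off $\ad L_0,\ad I_0$, and then letting the $[L_m,G_p]$ relation with its $\lambda(m+1)H_{m+p}$ term decide whether the extra constant survives) is exactly what the paper does in Lemmas~\ref{lemma(0,0,0)}, \ref{lemma(0,1,0)}, \ref{lemma(0.5,0,0)}, \ref{lemma(0.5,1,0)}.

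Where you genuinely diverge is the treatment of nonzero degrees. The paper (Lemmas~\ref{lemma(0,0,k)}, \ref{lemma(0,1,k)}, \ref{lemma(0.5,0,k)}, \ref{lemma(0.5,1,k)}) writes each $d_r$ in coordinates, plugs into every bracket relation, solves the resulting recursions explicitly, and then recognizes the answer as $\ad$ of a specific element. Your argument is more conceptual: since $\ad L_0$ acts on $(\L)_r$ as $-r\cdot\mathrm{id}$ plus the nilpotent $G\mapsto\lambda H$, it is invertible for $r\neq0$, so one can arrange $d'(L_0)=0$ and then the eigenvalue mismatch $-m\neq-(m+r)$ forces $d'(L_m)=d'(I_m)=d'(H_p)=d'(G_p)=0$ in turn. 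This is shorter and avoids the case-by-case coefficient chasing; it also makes transparent why no outer piece can appear in nonzero degree. The paper's route, by contrast, produces the explicit inner generator (e.g.\ $\frac{a_{L_0,k}}{k}L_k+\frac{b_{L_0,k}}{k}I_k$) as a by-product, which your argument gives only implicitly via the preimage under $\ad L_0$. Both are valid; yours is cleaner, the paper's is more self-contained for a reader who wants the formulas.
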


\noindent{\it Proof of Theorem \ref{thm-derivation algebra}}\ \ \,This theorem follows from Lemmas \ref{lemma(0,0,k)} to \ref{lemma(0.5,1,0)}.\hfill$\Box$

\begin{lemm}\label{lemma(0,0,k)}
$\Der_{\0}({\mathfrak{L}^0_\lambda})\cap\Der_{k}({\mathfrak{L}^0_\lambda})
=\Inn_{\0}({\mathfrak{L}^0_\lambda})\cap\Inn_{k}({\mathfrak{L}^0_\lambda})$, $\forall\,\,k\in\Z^*$.
\end{lemm}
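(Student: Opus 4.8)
The plan is to take an arbitrary even derivation $d\in\Der_{\0}(\mathfrak{L}^0_\lambda)\cap\Der_k(\mathfrak{L}^0_\lambda)$ with $k\in\Z^*$ and show it is inner by subtracting off a suitable $\ad x$. Since $d$ has degree $k$, it acts on the $\Z$-graded homogeneous subspaces by $d(L_m)\in(\mathfrak{L}^0_\lambda)_{m+k}$, etc., so we may write
\begin{eqnarray*}
d(L_m)&=&a_m L_{m+k}+b_m I_{m+k},\\
d(I_m)&=&c_m L_{m+k}+e_m I_{m+k},\\
d(G_p)&=&f_p G_{p+k}+h_p H_{p+k},\\
d(H_p)&=&u_p G_{p+k}+v_p H_{p+k},
\end{eqnarray*}
for scalars depending on the index. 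Since $k\neq0$, the element $L_k$ (respectively an $I_k$-multiple) is available, and a standard first step is to replace $d$ by $d-\ad(\mu L_k+\nu I_k)$ for suitably chosen $\mu,\nu$ so that the resulting derivation, still called $d$, satisfies $d(L_0)=0$. Then applying \eqref{eq-derivation} to $[L_0,L_n]=-nL_n$, $[L_0,I_n]=-nI_n$, $[L_0,G_p]=-pG_p$, $[L_0,H_p]=-pH_p$ forces $d$ to kill $L_n$, $I_n$, $G_p$, $H_p$ for the generic indices (those with $n\ne0$, $p\ne0$ when $k$ is fixed), leaving only finitely many undetermined coefficients, concentrated near the ``exceptional'' indices where $m+k$ or $p+k$ equals $0$.

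Next I would pin down those remaining coefficients using the multiplicative brackets that mix the four families: $[L_m,I_n]=(m-n)I_{m+n}$, $[G_p,G_q]=I_{p+q}$, $[I_m,G_p]=(m-2p)H_{p+m}$, and $[L_m,G_p]=(\tfrac m2-p)G_{p+m}$ (recall $\lambda=0$ here, so the $H$-term drops out). Applying $d$ to $[G_p,G_{-p}]=I_0$ and to $[I_m,G_p]$ relates the surviving $f$'s, $h$'s, $u$'s, $v$'s to each other and to the already-vanished $L$- and $I$-coefficients; applying $d$ to $[L_{-k},L_k]=-2kL_0=0$ and to $[L_{-k},I_k]$, $[L_m,G_{-k}]$ handles the last exceptional slots. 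The upshot should be that all the leftover freedom is exactly one or two parameters, and each such parameter corresponds to $\ad$ of an element of degree $k$ (e.g.\ a further multiple of $L_k$ or $I_k$, or $G_k$, $H_k$ in the $s=\tfrac12$-flavored indices), so that after one more subtraction $d=0$, proving $d\in\Inn$. The reverse inclusion $\Inn_{\0}\cap\Inn_k\subseteq\Der_{\0}\cap\Der_k$ is immediate since $\ad x$ for $x\in(\mathfrak{L}^0_\lambda)_k$ is automatically an even degree-$k$ derivation.

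The main obstacle I anticipate is bookkeeping at the exceptional degrees: when $k$ is such that $-k$ lands on an index where a bracket degenerates (for instance $[L_m,G_p]$ with $\tfrac m2-p=0$, or $[I_m,G_p]$ with $m-2p=0$, or the bracket $[G_p,G_q]$ producing $I_0$), the relations obtained from \eqref{eq-derivation} become underdetermined, so one must carefully play off several brackets against each other to show no extra outer derivation sneaks in. A secondary point of care is that a priori $d$ is only a formal sum over degrees, but since we have already fixed a single degree $k$, $d$ acts with finite support on each homogeneous component and all the manipulations above are legitimate. I would organize the argument as: (i) normalize $d(L_0)=0$ by an inner correction; (ii) use $\ad L_0$-eigenspace decomposition to kill generic coefficients; (iii) resolve the finitely many exceptional coefficients via the cross brackets; (iv) absorb the residual parameters into one last inner derivation; (v) note the trivial reverse inclusion.
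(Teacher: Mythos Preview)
Your overall strategy---normalize $d(L_0)=0$ by subtracting $\ad(\mu L_k+\nu I_k)$, then use the $\ad L_0$-eigenspace decomposition to kill the remaining coefficients---is sound and is essentially a reorganization of the paper's proof (the paper instead solves the recursions coming from setting $n=0$ in the derivation identities and then recognizes the result as $\ad(\tfrac{a_{L_0,k}}{k}L_k+\tfrac{b_{L_0,k}}{k}I_k)$). However, you have misread the notation: in $\mathfrak{L}^0_\lambda$ the superscript $0$ is the parameter $s$, not $\lambda$. The scalar $\lambda\in\C$ is arbitrary throughout this lemma, so the bracket $[L_m,G_p]=(\tfrac m2-p)G_{p+m}+\lambda(m+1)H_{p+m}$ retains its $H$-term, and in particular $[L_0,G_p]=-pG_p+\lambda H_p$. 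Your step (ii) must therefore be carried out with this extra term present; fortunately it still works (first $[L_0,H_p]=-pH_p$ forces $u_p=v_p=0$ since $k\neq0$, and then the $G$-relation gives $f_p=h_p=0$), but as written your argument is for the wrong algebra.

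Two smaller points. First, the ``exceptional index'' bookkeeping you anticipate does not actually arise here: once $d(L_0)=0$, the eigenvalue shift by $k\neq0$ kills \emph{all} coefficients uniformly, including those at $n=0$ and $p=0$, so steps (iii)--(iv) are unnecessary. Second, your parenthetical about absorbing residual parameters into $\ad G_k$ or $\ad H_k$ cannot be right in this lemma: those would be \emph{odd} inner derivations, whereas we are computing the even piece $\Der_{\bar 0}\cap\Der_k$. After correcting the $\lambda$ issue, your normalized $d$ is simply zero and no further inner subtraction is needed.
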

\begin{proof}

For any $d_{\0,k}\in\Der_{\0}({\mathfrak{L}^0_\lambda})\cap\Der_{k}({\mathfrak{L}^0_\lambda})$, we can suppose
\begin{eqnarray}\label{eq(0,0,k)}
\begin{array}{ccccc}
d_{\0,k}(L_{n})\!\!\!&=&\!\!\!a_{L_{n},k}L_{n+k}\!\!\!&+&\!\!\!b_{L_{n},k}I_{n+k},\vs{6pt}\\
d_{\0,k}(I_{n})\!\!\!&=&\!\!\!a_{I_{n},k}L_{n+k}\!\!\!&+&\!\!\!b_{I_{n},k}I_{n+k},\vs{6pt}\\
d_{\0,k}(G_{n})\!\!\!&=&\!\!\!c_{G_{n},k}G_{n+k}\!\!\!&+&\!\!\!d_{G_{n},k}H_{n+k},\vs{6pt}\\
d_{\0,k}(H_{n})\!\!\!&=&\!\!\!c_{H_{n},k}G_{n+k}\!\!\!&+&\!\!\!d_{H_{n},k}H_{n+k}.
\end{array}
\end{eqnarray}
The action of $d_{\0,k}$ on both sides of $[L_{n},L_{m}]=(n-m)L_{n+m}$ gives
\begin{eqnarray*}
\begin{array}{ccccc}
(n-m)a_{L_{m+n},k}\!\!\!&=&\!\!\!(n+k-m)a_{L_{n},k}\!\!\!&+&\!\!\!(n-m-k)a_{L_{m},k},\vs{6pt}\\
(n-m)b_{L_{m+n},k}\!\!\!&=&\!\!\!(n+k-m)b_{L_{n},k}\!\!\!&+&\!\!\!(n-m-k)b_{L_{m},k}.
\end{array}
\end{eqnarray*}
When $n=0$, we have
\begin{eqnarray}\label{(0,0,k)-LL}
&&a_{L_{m},k}=(1-\frac{m}{k})a_{L_{0},k},\ \ \ \
b_{L_{m},k}=(1-\frac{m}{k})b_{L_{0},k}.
\end{eqnarray}
Consider the action of $d_{\0,k}$ on both sides of $[L_{n},I_{m}]=(n-m)I_{n+m}$, we have
\begin{eqnarray*}
&&(n-m)a_{I_{m+n},k}=(n-m-k)a_{I_{m},k},\vs{6pt}\\
&&(n-m)b_{I_{m+n},k}=(n+k-m)a_{L_{n},k}+(n-m-k)b_{I_{m},k}.
\end{eqnarray*}
Taking $n=0$ and recalling \eqref{(0,0,k)-LL}, we have
\begin{eqnarray}\label{(0,0,k)-LI}
&&a_{I_{m},k}=0,\ \ \ \ b_{I_{m},k}=(1-\frac{m}{k})a_{L_{0},k}.
\end{eqnarray}
Acting $d_{\0,k}$ on both sides of $[L_{n},H_{m}]=(\frac{n}{2}-m)H_{n+m}$, one has
\begin{eqnarray*}
&&(\frac{n}{2}-m)c_{H_{m+n},k}=(\frac{n}{2}-m-k)c_{H_{m},k},\vs{6pt}\\
&&(\frac{n}{2}-m)d_{H_{m+n},k}=(\frac{n+k}{2}-m)a_{L_{n},k}
+\lambda(n+1)c_{H_{m},k}+(\frac{n}{2}-m-k)d_{H_{m},k}.
\end{eqnarray*}
When $n=0$, together with \eqref{(0,0,k)-LL}, one can deduce
\begin{eqnarray}\label{(0,0,k)-LH}
&&c_{H_{m},k}=0,\ \ \ \ d_{H_{m},k}=(\frac{1}{2}-\frac{m}{k})a_{L_{0},k}.
\end{eqnarray}
Acting $d_{\0,k}$ on both sides of $[L_{n},G_{m}]=(\frac{n}{2}-m)G_{n+m}+\lambda(n+1)H_{m+n}$, we have
\begin{eqnarray*}
(\frac{n}{2}-m)c_{G_{m+n},k}+\lambda(n+1)c_{H_{m+n},k}
\!\!\!&=&\!\!\!(\frac{n+k}{2}-m)a_{L_{n},k}+(\frac{n}{2}-m-k)c_{G_{m},k},\vs{6pt}\\
(\frac{n}{2}-m)d_{G_{m+n},k}
+\lambda(n+1)d_{H_{m+n},k}\!\!\!&=&\!\!\!\lambda(n+k+1)a_{L_{n},k}
+(n+k-2m)b_{L_{n},k},\vs{6pt}\\
&&+\lambda(n+1)c_{G_{m},k}+(\frac{n}{2}-m-k)d_{G_{m},k}.
\end{eqnarray*}
Taking $n=0$ and recalling \eqref{(0,0,k)-LL}, \eqref{(0,0,k)-LH}, we have
\begin{eqnarray}\label{(0,0,k)-LG}
&&c_{G_{m},k}=(\frac{1}{2}-\frac{m}{k})a_{L_{0},k},\ \ \ \
d_{G_{m},k}=\lambda(1+\frac{1}{k})a_{L_{0},k}+(1-\frac{2m}{k})b_{L_{0},k}.
\end{eqnarray}

According to the identities from \eqref{(0,0,k)-LL} to \eqref{(0,0,k)-LG}, we can deduce
\begin{eqnarray}\label{(0,0,k)-ok}
\begin{array}{llllll}
&&d_{\0,k}(L_{n})=(1-\frac{n}{k})a_{L_{0},k}L_{n+k}
+(1-\frac{n}{k})b_{L_{0},k}I_{n+k},\vs{6pt}\\
&&d_{\0,k}(I_{n})=(1-\frac{n}{k})a_{L_{0},k}I_{n+k},\ \ \
d_{\0,k}(H_{n})=(\frac{1}{2}-\frac{n}{k})a_{L_{0},k}H_{n+k},\vs{6pt}\\
&&d_{\0,k}(G_{n})=(\frac{1}{2}-\frac{n}{k})a_{L_{0},k}G_{n+k}
+(\lambda(1+\frac{1}{k})a_{L_{0},k}+(1-\frac{2n}{k})b_{L_{0},k})H_{n+k}.
\end{array}
\end{eqnarray}
It is easy to verify that $d_{\0,k}=\ad(\frac{a_{L_{0},k}}{k}L_{k}+\frac{b_{L_{0},k}}{k}I_{k})$, which implies
$\Der_{\0}({\mathfrak{L}^0_\lambda})\cap\Der_{k}({\mathfrak{L}^0_\lambda})
=\Inn_{\0}({\mathfrak{L}^0_\lambda})\cap\Inn_{k}({\mathfrak{L}^0_\lambda})$ for any $k\in\Z^*$.
\end{proof}

Define $\partial_{1}$ and $\partial_{2}$ as follows:
\begin{eqnarray}\label{partial{1},{2}}
\begin{array}{llllll}
&&\partial_{1}(L_{k})=0,\ \ \,\partial_{1}(I_{k})=2I_{k},
\ \ \,\partial_{1}(G_{k})=G_{k},\ \ \,\partial_{1}(H_{k})=3H_{k},\vs{6pt}\\
&&\partial_{2}(L_{k})=\partial_{2}(I_{k})=\partial_{2}(H_{k})=0,
\ \ \,\partial_{2}(G_{k})=H_{k}.
\end{array}
\end{eqnarray}
Then $\partial_{1}\in \Der_{\0}({\mathfrak{L}^0_0})\cap\Der_{0}({\mathfrak{L}^0_0})$ and $\partial_{2}\in \Der_{\0}({\mathfrak{L}^0_\lambda})\cap\Der_{0}({\mathfrak{L}^0_\lambda})$ for any $\lambda\in\C$.

\begin{lemm}\label{lemma(0,0,0)}
$\Der_{\0}({\mathfrak{L}^0_\lambda})\cap\Der_{0}({\mathfrak{L}^0_\lambda})
=\Inn_{\0}({\mathfrak{L}^0_\lambda})\cap\Inn_{0}({\mathfrak{L}^0_\lambda})
\oplus\C\delta_{\lambda,0}\partial_{1}\oplus\C\partial_{2}$.
\end{lemm}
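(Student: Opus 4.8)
The plan is to run the same degree-$0$ computation that proved Lemma \ref{lemma(0,0,k)}, but now track the extra parameters that appear when $k=0$. Starting from a homogeneous even derivation $d_{\0,0}$ of degree $0$, write
\[
d_{\0,0}(L_n)=a_{L_n}L_n+b_{L_n}I_n,\quad
d_{\0,0}(I_n)=a_{I_n}L_n+b_{I_n}I_n,
\]
\[
d_{\0,0}(G_n)=c_{G_n}G_n+d_{G_n}H_n,\quad
d_{\0,0}(H_n)=c_{H_n}G_n+d_{H_n}H_n.
\]
First I would apply $d_{\0,0}$ to $[L_n,L_m]=(n-m)L_{n+m}$, then to $[L_n,I_m]=(n-m)I_{n+m}$, $[L_n,H_m]=(\frac n2-m)H_{n+m}$ and $[L_n,G_m]=(\frac n2-m)G_{n+m}+\lambda(n+1)H_{n+m}$, each time specialising $n=0$ to get recursions. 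Unlike the $k\neq0$ case the coefficient $(1-\frac m k)$ becomes constant in $m$, so the $L$- and $I$-coefficients turn out to be affine-in-$n$ functions with an undetermined constant term; this is the source of the extra one- or two-dimensional piece. The relation from $[I_n,G_m]=(n-2m)H_{n+m}$ must also be used to pin down $a_{I_n}$, $b_{I_n}$ against the $H$-data, and $[G_p,G_q]=I_{p+q}$ to tie the $G$-coefficients to the $I$-coefficients. Crucially, acting on $[L_n,G_m]$ the term $\lambda(n+1)H_{n+m}$ produces, at $n=0$, a relation of the form (const)$\cdot\lambda=(\text{something})$; this is exactly where the factor $\delta_{\lambda,0}$ enters — the extra scaling freedom recorded by $\partial_1$ survives only when $\lambda=0$, whereas $\partial_2$ (the ``$G\mapsto H$'' map) is a derivation for every $\lambda$.

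Once all coefficients are solved, I expect to find that a general $d_{\0,0}$ has the form
\[
d_{\0,0}= \ad\!\bigl(\alpha L_0+\beta I_0\bigr)+\gamma\,\delta_{\lambda,0}\partial_1+\eta\,\partial_2
\]
for scalars $\alpha,\beta,\gamma,\eta\in\C$, where $\ad L_0$ and $\ad I_0$ span $\Inn_{\0}(\mathfrak L^0_\lambda)\cap\Inn_0(\mathfrak L^0_\lambda)$ (note $\ad L_0(X_n)=-nX_n$ on $L,I$ and $\ad L_0(G_n)=-nG_n-\lambda H_n$, $\ad L_0(H_n)=-nH_n$, while $\ad I_0(G_n)=-2nH_n$ and kills $L,I,H$). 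Conversely one checks directly from \eqref{partial{1},{2}} that $\partial_1$ is a derivation precisely when $\lambda=0$ and that $\partial_2$ is a derivation for all $\lambda$, which was already noted after \eqref{partial{1},{2}}. Finally I would verify the sum is direct: if $\ad(\alpha L_0+\beta I_0)+\gamma\delta_{\lambda,0}\partial_1+\eta\partial_2=0$, evaluating on $I_k$ forces (using $\ad L_0(I_k)=-kI_k$, $\partial_1(I_k)=2I_k$) a relation giving $\beta$ in terms of $\gamma$ but with a free part; evaluating on $L_k$ and $H_k$ then forces $\alpha=\eta=\gamma=\beta=0$. This directness check and the bookkeeping of which constant survives for which $\lambda$ are the only slightly delicate points; the rest is the routine bracket-chasing already illustrated in Lemma \ref{lemma(0,0,k)}.

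The main obstacle I anticipate is not any single hard computation but rather correctly isolating the ``constant of integration'' in each affine coefficient and showing that, after imposing \emph{all} the Jacobi-type relations (especially those coming from $[I_n,G_m]$ and $[G_p,G_q]$, which were not needed in the $k\ne0$ case because they became trivial), exactly the two (resp.\ one, when $\lambda\ne0$) extra dimensions $\C\partial_1\oplus\C\partial_2$ remain modulo inner derivations. In particular one must be careful that the constant terms are not already absorbed by $\ad L_0$ and $\ad I_0$; the point is that $\ad L_0$ and $\ad I_0$ act as \emph{scalars depending on the degree} $n$, whereas $\partial_1$ acts by a fixed scalar on each of $I_k,G_k,H_k$, so they are genuinely independent. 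Keeping the $\lambda$-dependence visible throughout makes the $\delta_{\lambda,0}$ appear automatically.
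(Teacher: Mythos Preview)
Your proposal is correct and follows essentially the same approach as the paper: set up the general degree-$0$ even ansatz, apply $d_{\0,0}$ to all the defining brackets (including $[I_n,G_m]$ and $[G_p,G_q]$, which are indeed needed here to close the system), and read off that the result is $\ad(-a_{L_1}L_0-b_{L_1}I_0)+d_{G_0}\partial_2+\delta_{\lambda,0}c_{G_0}\partial_1$. One small procedural remark: for the $[L_n,L_m]$ relation, specialising $n=0$ alone only yields $a_{L_0}=0$; the paper instead uses the additive relation $a_{L_{m+n}}=a_{L_m}+a_{L_n}$ (for $m\neq n$) together with a couple of specific values to conclude $a_{L_k}=ka_{L_1}$, so be prepared to do slightly more than just ``set $n=0$'' at that step.
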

\begin{proof}

For any $d_{\0,0}\in\Der_{\0}({\mathfrak{L}^0_\lambda})\cap\Der_{0}({\mathfrak{L}^0_\lambda})$, we can suppose
\begin{eqnarray}\label{eq(0,0,0)}
\begin{array}{ccccc}
d_{\0,0}(L_{n})\!\!\!&=&\!\!\!a_{L_{n},0}L_{n}
\!\!\!&+&\!\!\!b_{L_{n},0}I_{n},\vs{6pt}\\
d_{\0,0}(I_{n})\!\!\!&=&\!\!\!a_{I_{n},0}L_{n}
\!\!\!&+&\!\!\!b_{I_{n},0}I_{n},\vs{6pt}\\
d_{\0,0}(G_{n})\!\!\!&=&\!\!\!c_{G_{n},0}G_{n}
\!\!\!&+&\!\!\!d_{G_{n},0}H_{n},\vs{6pt}\\
d_{\0,0}(H_{n})\!\!\!&=&\!\!\!c_{H_{n},0}G_{n}
\!\!\!&+&\!\!\!d_{H_{n},0}H_{n}.
\end{array}
\end{eqnarray}
Acting $d_{\0,0}$ on both sides of $[L_{n},L_{m}]=(n-m)L_{n+m}$, we obtain
\begin{eqnarray}\label{(0,0,0)-LL-000}
\begin{array}{ccccc}
(n-m)a_{L_{m+n},0}\!\!\!&=&\!\!\!(n-m)a_{L_{n},0}\!\!\!&+&\!\!\!(n-m)a_{L_{m},0},\vs{6pt}\\
(n-m)b_{L_{m+n},0}\!\!\!&=&\!\!\!(n-m)b_{L_{n},0}\!\!\!&+&\!\!\!(n-m)b_{L_{m},0},
\end{array}
\end{eqnarray}
which implies the following identities
\begin{eqnarray*}
&&a_{L_{m+n},0}=a_{L_{n},0}+a_{L_{m},0},\ \ \ \ b_{L_{m+n},0}=b_{L_{n}}+b_{L_{m},0},\ \ \ m\neq n.
\end{eqnarray*}
Then we can deduce
\begin{eqnarray}\label{(0,0,0)-LL}
&&a_{L_{k},0}=a_{L_{2},0}+(k-2)a_{L_{1},0},\ \ \ \
a_{L_{-k},0}=a_{L_{-2},0}+(k-2)a_{L_{-1},0},\ \ \ k\geq2,
\end{eqnarray}
which together with $a_{L_{-k},0}+a_{L_{k},0}=a_{L_{0},0}$ gives $a_{L_{0},0}=0$. Then
\begin{eqnarray}\label{(0,0,0)-LL-001}
a_{L_{k},0}=-a_{L_{-k},0},
\end{eqnarray}
which together with $a_{L_{1},0}=a_{L_{2},0}+a_{L_{-1},0}$ gives $a_{L_{2},0}=2a_{L_{1},0}$. Combining \eqref{(0,0,0)-LL} and \eqref{(0,0,0)-LL-001}, we know $a_{L_{k},0}=ka_{L_{1},0}$ for $k\in\Z$. Similarly, we can deduce
\begin{eqnarray}\label{(0,0,0)-LL-results}
a_{L_{k},0}=ka_{L_{1},0},\ \ \ b_{L_{k},0}=kb_{L_{1},0},\ \ \forall\,\,k\in\Z.
\end{eqnarray}

Taking the action of $d_{\0,0}$ on both sides of $[L_{n},I_{m}]=(n-m)I_{n+m}$, we have
\begin{eqnarray*}
\begin{array}{ccccc}
(n-m)a_{I_{m+n},0}\!\!\!&=&\!\!\!(n-m)a_{I_{m},0},\vs{6pt}\\
(n-m)b_{I_{m+n},0}\!\!\!&=&\!\!\!(n-m)a_{L_{n},0}\!\!\!&+&\!\!\!(n-m)b_{I_{m},0},
\end{array}
\end{eqnarray*}
from which we can deduce
\begin{eqnarray}\label{(0,0,0)-LI-results}
&&a_{I_{k},0}=a_{I_{0},0},\ \ \ \ b_{I_{k},0}=b_{I_{0},0}+ka_{L_{1},0},\ \ \forall\,\,k\in\Z.
\end{eqnarray}
Using $[L_{n},H_{m}]=(\frac{n}{2}-m)H_{n+m}$ and \eqref{eq-derivation}, we obtain
\begin{eqnarray*}
&&(\frac{n}{2}-m)c_{H_{m+n},0}=(\frac{n}{2}-m)c_{H_{m},0},\vs{6pt}\\
&&(\frac{n}{2}-m)d_{H_{m+n},0}=(\frac{n}{2}-m)a_{L_{n},0}
+\lambda(n+1)c_{H_{m},0}+(\frac{n}{2}-m)d_{H_{m},0},
\end{eqnarray*}
from which we can deduce
\begin{eqnarray}\label{(0,0,0)-LH-results}
&&c_{H_{k},0}=c_{H_{0},0},\ \ \ \lambda c_{H_{0},0}=0,\ \ \
d_{H_{k},0}=ka_{L_{1},0}+d_{H_{0},0},\ \ \forall\,\,k\in\Z.
\end{eqnarray}
Using $[L_{n},G_{m}]=(\frac{n}{2}-m)G_{n+m}+\lambda (n+1)H_{n+m}$ and \eqref{eq-derivation}, we obtain
\begin{eqnarray}\label{(0,0,0)-LG-000}
\begin{array}{ccccccc}
(\frac{n}{2}-m)c_{G_{m+n},0}\!\!\!&+&\!\!\!\lambda(n+1)c_{H_{m+n},0}
\!\!\!&=&\!\!\!\!\!\!\!\!\!
(\frac{n}{2}-m)a_{L_{n},0}\!\!\!&+&\!\!\!(\frac{n}{2}-m)c_{G_{m},0},\vs{6pt}\\
(\frac{n}{2}-m)d_{G_{m+n},0}\!\!\!&+&\!\!\!\lambda (n+1)d_{H_{m+n},0}\!\!\!&=&\!\!\!\!\!\!\!\!\!
\lambda(n+1)a_{L_{n},0}\!\!\!&+&\!\!\!(n-2m)b_{L_{n},0}\vs{6pt}\\
& & & &+\lambda(n+1)c_{G_{m},0}\!\!\!&+&\!\!\!(\frac{n}{2}-m)d_{G_{m},0}.
\end{array}
\end{eqnarray}
Letting $m=0$ in the first equation in \eqref{(0,0,0)-LG-000}, one can obtain
\begin{eqnarray}\label{(0,0,0)-LG-results001}
c_{G_{k},0}=ka_{L_{1},0}+c_{G_{0},0},\ \ \forall\,\,k\in\Z.
\end{eqnarray}
Letting $n=0$ in  the second equation in \eqref{(0,0,0)-LG-000}, one can obtain
\begin{eqnarray}\label{(0,0,0)-LG-results002}
\lambda d_{H_{0},0}=\lambda c_{G_{0},0}.
\end{eqnarray}
Letting $m=0$ in  the second equation in \eqref{(0,0,0)-LG-000}, one can obtain
\begin{eqnarray}\label{(0,0,0)-LG-results003}
d_{G_{k},0}=2kb_{L_{1},0}+d_{G_{0},0},\ \ \forall\,\,k\in\Z.
\end{eqnarray}

Acting $d_{\0,0}$ on both sides of $[I_{n},G_{m}]=(n-2m)H_{n+m}$, we have
\begin{eqnarray*}
&&(n-2m)c_{H_{m+n},0}=(\frac{n}{2}-m)a_{I_{n},0},\vs{6pt}\\
&&(n-2m)d_{H_{m+n},0}=\lambda(n+1)a_{I_{n},0}+(n-2m)b_{I_{n},0}+(n-2m)c_{G_{m},0},
\end{eqnarray*}
which imply
\begin{eqnarray}\label{(0,0,0)-IG-results}
a_{I_{k},0}=a_{I_{0},0}=2c_{H_{0},0},\ \ \ \
d_{H_{0},0}=b_{I_{0},0}+c_{G_{0},0},\ \ \forall\,\,k\in\Z.
\end{eqnarray}
Using $[G_{n},G_{m}]=I_{m+n}$ and \eqref{eq-derivation}, we obtain
\begin{eqnarray*}
&&a_{I_{m+n},0}=0,\ \ \ \ b_{I_{m+n},0}=c_{G_{m},0}+c_{G_{n},0},
\end{eqnarray*}
which gives
\begin{eqnarray}\label{(0,0,0)-GG-results}
a_{I_{k},0}=0,\ \ \ \ b_{I_{0},0}=2c_{G_{0},0},\ \ \forall\,\,k\in\Z.
\end{eqnarray}

Combining the identities obtained in \eqref{(0,0,0)-LL-results}--\eqref{(0,0,0)-GG-results}, we have deduced the following
\begin{eqnarray*}
d_{\0,0}(L_{k})\!\!\!&=&\!\!\!ka_{L_{1},0}L_{k}+kb_{L_{1},0}I_{k},\vs{6pt}\\
d_{\0,0}(G_{k})\!\!\!&=&\!\!\!(ka_{L_{1},0}+c_{G_{0},0})G_{k}
+(2kb_{L_{1},0}+d_{G_{0},0})H_{k},\vs{6pt}\\
d_{\0,0}(I_{k})\!\!\!&=&\!\!\!(ka_{L_{1},0}+2c_{G_{0},0})I_{k},\ \ \ \
d_{\0,0}(H_{k})=(ka_{L_{1},0}+3c_{G_{0},0})G_{k}.
\end{eqnarray*}
It is easy to see that $d_{\0,0}=\ad(-a_{L_{1},0}L_{0}-b_{L_{1},0}I_{0})+ d_{G_{0},0}\partial_{2}+\delta_{\lambda,0}c_{G_{0},0}\partial_{1}$, which implies
$\Der_{\0}({\mathfrak{L}^0_\lambda})\cap\Der_{0}({\mathfrak{L}^0_\lambda})=
\Inn_{\0}({\mathfrak{L}^0_\lambda})\cap\Inn_{0}({\mathfrak{L}^0_\lambda})
\oplus\C\delta_{\lambda,0}\partial_{1}\oplus\C\partial_{2}$.
\end{proof}

\begin{lemm}\label{lemma(0,1,k)}
$\Der_{\1}({\mathfrak{L}^0_\lambda})\cap\Der_{k}({\mathfrak{L}^0_\lambda})
=\Inn_{\1}({\mathfrak{L}^0_\lambda})\cap\Inn_{k}({\mathfrak{L}^0_\lambda}),\ \ \forall\,\,k\in\Z^*$.
\end{lemm}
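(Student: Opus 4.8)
The plan is to mimic the strategy of Lemma \ref{lemma(0,0,k)}, now for an odd derivation of nonzero degree $k$. Since $d_{\1,k}$ is odd and homogeneous of degree $k$, it must send the even part to the odd part and vice versa; concretely I would write
\begin{eqnarray*}
\begin{array}{lll}
d_{\1,k}(L_{n})=p_{L_n,k}G_{n+k}+q_{L_n,k}H_{n+k},&&
d_{\1,k}(I_{n})=p_{I_n,k}G_{n+k}+q_{I_n,k}H_{n+k},\vs{6pt}\\
d_{\1,k}(G_{n})=r_{G_n,k}L_{n+k}+s_{G_n,k}I_{n+k},&&
d_{\1,k}(H_{n})=r_{H_n,k}L_{n+k}+s_{H_n,k}I_{n+k}.
\end{array}
\end{eqnarray*}
Then I would apply $d_{\1,k}$ to each defining bracket relation, using the sign-corrected Leibniz rule \eqref{eq-derivation} with $[d]=\1$. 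The natural order is: first $[L_n,L_m]$ and $[L_n,I_m]$ to pin down the coefficients $p_{L_n,k},q_{L_n,k},p_{I_n,k},q_{I_n,k}$ as (affine-linear in $n$) multiples of their values at a single index; then $[L_n,H_m]$ and $[L_n,G_m]$ to control $r_{H_n,k},s_{H_n,k},r_{G_n,k},s_{G_n,k}$; then $[G_n,G_m]=I_{n+m}$ and $[I_n,G_m]=(n-2p)H_{n+m}$ as the cross-checks that force the remaining free parameters to line up. In each case, specializing one of $m,n$ to $0$ (as in the proof of Lemma \ref{lemma(0,0,k)}) collapses the recursions, and the nonvanishing of $k$ is what makes the relevant linear systems nonsingular.

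After this bookkeeping I expect to be left with a one- or two-parameter family of solutions, and the final step is to exhibit each such $d_{\1,k}$ as $\ad$ of an explicit odd element of $({\mathfrak{L}^0_\lambda})_k$ — presumably a combination of $G_k$ and $H_k$ with coefficients read off from the surviving parameters (divided by $k$), exactly paralleling the identification $d_{\0,k}=\ad(\tfrac{a_{L_0,k}}{k}L_k+\tfrac{b_{L_0,k}}{k}I_k)$ at the end of Lemma \ref{lemma(0,0,k)}. That would give $\Der_{\1}\cap\Der_k=\Inn_{\1}\cap\Inn_k$ for all $k\in\Z^*$.

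The main obstacle I anticipate is not any single computation but keeping the parity signs straight: because $L_n,I_n$ are even while $G_n,H_n$ are odd, the term $(-1)^{[d][x]}$ in \eqref{eq-derivation} is trivial when $d_{\1,k}$ acts on a bracket with an even first argument but contributes a sign when the first argument is odd (i.e.\ in the relations $[G_n,G_m]=I_{n+m}$ and when reorganizing $[I_n,G_m]$, $[L_n,G_m]$, $[L_n,H_m]$ with $G,H$ moved to the front). I would therefore be careful to always write each bracket with a fixed convention for which argument is "$x$" in \eqref{eq-derivation}, so that the sign $(-1)^{[d][x]}=(-1)^{[x]}$ is applied consistently; the symmetry/antisymmetry of the structure constants then does the rest. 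A secondary subtlety is that, unlike the even case, there is no "$\lambda$-splitting" here — I expect the $\lambda(m+1)H_{m+p}$ term in $[L_m,G_p]$ to be absorbed harmlessly — so the conclusion holds uniformly in $\lambda$, which is why the statement carries no $\delta_{\lambda,0}$ factor.
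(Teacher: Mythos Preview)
Your proposal is correct and follows essentially the same route as the paper: set up the general odd ansatz, apply $d_{\1,k}$ to the brackets $[L_n,L_m]$, $[L_n,I_m]$, $[L_n,H_m]$, $[L_n,G_m]$, specialize one index to $0$ to solve the recursions, and then identify $d_{\1,k}=\ad\bigl(\tfrac{c_{L_0,k}}{k}G_k+(\tfrac{d_{L_0,k}}{k}+\tfrac{\lambda}{k^2}c_{L_0,k})H_k\bigr)$. The only minor difference is that the paper does not need your cross-checks $[G_n,G_m]$ and $[I_n,G_m]$ at all---the four $L$-relations already determine every coefficient---and the $\lambda$ term does not vanish from the computation but survives into the $H_k$-coefficient of the inner element, exactly realizing your ``absorbed harmlessly'' prediction.
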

\begin{proof}
For any $d_{\1,k}\in\Der_{\1}({\mathfrak{L}^0_\lambda})\cap\Der_{k}({\mathfrak{L}^0_\lambda})$, one can suppose
\begin{eqnarray}\label{eq(0,1,k)}
\begin{array}{ccccc}
d_{\1,k}(L_{n})\!\!\!&=&\!\!\!c_{L_{n},k}G_{n+k}\!\!\!&+&\!\!\!d_{L_{n},k}H_{n+k},\vs{6pt}\\
d_{\1,k}(I_{n})\!\!\!&=&\!\!\!c_{I_{n},k}G_{n+k}\!\!\!&+&\!\!\!d_{I_{n},k}H_{n+k},\vs{6pt}\\
d_{\1,k}(G_{n})\!\!\!&=&\!\!\!a_{G_{n},k}L_{n+k}\!\!\!&+&\!\!\!b_{G_{n},k}I_{n+k},\vs{6pt}\\
d_{\1,k}(H_{n})\!\!\!&=&\!\!\!a_{H_{n},k}L_{n+k}\!\!\!&+&\!\!\!b_{H_{n},k}I_{n+k}.
\end{array}
\end{eqnarray}
Using $[L_{n},L_{m}]=(n-m)L_{m+n}$ and \eqref{eq-derivation}, we obtain
\begin{eqnarray*}
\begin{array}{ccccccc}
(n-m)c_{L_{m+n},k}\!\!\!&=&\!\!\!(\frac{n}{2}-m-k)c_{L_{m},k}
-(\frac{m}{2}-n-k)c_{L_{n},k},\vs{6pt}\\
(n-m)d_{L_{m+n},k}\!\!\!&=&\!\!\!\!\!\!\!\!\!
\lambda(n+1)c_{L_{m},k}+(\frac{n}{2}-m-k)d_{L_{m},k}\vs{6pt}\\
&&-\lambda(m+1)c_{L_{n},k}-(\frac{m}{2}-n-k)d_{L_{n},k}.
\end{array}
\end{eqnarray*}
Taking $n=0$, we obtain
\begin{eqnarray}\label{(0,1,k)-LL-results}
\begin{array}{lll}
&&c_{L_{m},k}=(1-\frac{m}{2k})c_{L_{0},k},\vs{6pt}\\
&&d_{L_{m},k}=(1-\frac{m}{2k})d_{L_{0},k}
-(\frac{1}{2k^2}+\frac{1}{k})\lambda mc_{L_{0},k}.
\end{array}
\end{eqnarray}
Using $[L_{n},I_{m}]=(n-m)I_{m+n}$ and \eqref{eq-derivation}, we get
\begin{eqnarray*}
&&(n-m)c_{I_{m+n},k}=(\frac{n}{2}-m-k)c_{I_{m},k},\vs{6pt}\\
&&(n-m)d_{I_{m+n},k}=\lambda(n+1)c_{I_{m},k}
+(\frac{n}{2}-m-k)d_{I_{m},k}-(m-2n-2k)c_{L_{n},k}.
\end{eqnarray*}
Taking $n=0$ in \eqref{(0,1,k)-LL-results}, we obtain
\begin{eqnarray}\label{(0,1,k)-LI-results}
&&c_{I_{m},k}=0,\ \ \ d_{I_{m},k}=(2-\frac{m}{k})c_{L_{0},k}.
\end{eqnarray}
Acting $d_{\1,k}$ on both sides of $[L_{n},H_{m}]=(\frac{n}{2}-m)H_{m+n}$, we have
\begin{eqnarray*}
\begin{array}{ccc}
(\frac{n}{2}-m)a_{H_{m+n},k}\!\!\!&=&\!\!\!(n-m-k)a_{H_{m},k},\vs{6pt}\\
(\frac{n}{2}-m)b_{H_{m+n},k}\!\!\!&=&\!\!\!(n-m-k)b_{H_{m},k}.
\end{array}
\end{eqnarray*}
Taking $n=0$, we obtain
\begin{eqnarray}\label{(0,1,k)-LH-results}
&&a_{H_{m},k}=0,\ \ \ b_{H_{m},k}=0.
\end{eqnarray}
Using $[L_{n},G_{m}]=(\frac{n}{2}-m)G_{m+n}+\lambda(n+1)H_{m+n}$, we obtain
\begin{eqnarray*}
\begin{array}{ccccccc}
(\frac{n}{2}-m)a_{G_{m+n},k}\!\!\!&+&\!\!\!\lambda(n+1)a_{H_{m+n},k}
\!\!\!&=&\!\!\!(n-m-k)a_{G_{m},k},\vs{6pt}\\
(\frac{n}{2}-m)b_{G_{m+n},k}\!\!\!&+&\!\!\!\lambda(n+1)b_{H_{m+n},k}
\!\!\!&=&\!\!\!(n-m-k)b_{G_{m},k}\!\!\!&+&\!\!\!c_{L_{n},k}.
\end{array}
\end{eqnarray*}
Taking $n=0$ and combining \eqref{(0,1,k)-LH-results} with \eqref{(0,1,k)-LL-results}, we obtian
\begin{eqnarray}\label{(0,1,k)-LG-results}
&&a_{G_{m},k}=0,\ \ \ b_{G_{m},k}=\frac{1}{k}c_{L_{0},k}.
\end{eqnarray}
Recalling \eqref{(0,1,k)-LL-results}--\eqref{(0,1,k)-LG-results}, one can deduce
\begin{eqnarray}\label{(0,1,k)-results}
\begin{array}{ccccc}
d_{\1,k}(L_{m})\!\!\!&=&\!\!\!(1-\frac{m}{2k})c_{L_{0},k}G_{m+k}
+((1-\frac{m}{2k})d_{L_{0},k}-(\frac{1}{2k^2}+\frac{1}{k})\lambda mc_{L_{0},k})H_{m+k},\vs{6pt}\\
d_{\1,k}(I_{m})\!\!\!&=&\!\!\!(2-\frac{m}{k})c_{L_{0},k}H_{m+k},\ \ \
d_{\1,k}(G_{m})=\frac{1}{k}c_{L_{0},k}I_{m+k},\ \ \ d_{\1,k}(H_{m})=0.
\end{array}
\end{eqnarray}
It is easy to verify that
$d_{\1,k}=\ad (\frac{c_{L_{0},k}}{k}G_{k}+(\frac{d_{L_{0},k}}{k}
+\frac{\lambda}{k^2}c_{L_{0},k})H_{k})$, which implies $\Der_{\1}({\mathfrak{L}^0_\lambda})\cap\Der_{k}({\mathfrak{L}^0_\lambda})
=\Inn_{\1}({\mathfrak{L}^0_\lambda})\cap\Inn_{k}({\mathfrak{L}^0_\lambda})$ for any $k\in\Z^*$.
\end{proof}

\begin{lemm}\label{lemma(0,1,0)}
$\Der_{\1}({\mathfrak{L}^0_\lambda})\cap\Der_{0}({\mathfrak{L}^0_\lambda})
=\Inn_{\1}({\mathfrak{L}^0_\lambda})\cap\Inn_{0}({\mathfrak{L}^0_\lambda})$.
\end{lemm}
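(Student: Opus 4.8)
The plan is to mimic exactly the method of Lemma \ref{lemma(0,1,k)}, but now with $k=0$, so that an odd degree-zero derivation $d_{\1,0}$ sends $L_n\mapsto c_{L_n,0}G_n+d_{L_n,0}H_n$, $I_n\mapsto c_{I_n,0}G_n+d_{I_n,0}H_n$, $G_n\mapsto a_{G_n,0}L_n+b_{G_n,0}I_n$ and $H_n\mapsto a_{H_n,0}L_n+b_{H_n,0}I_n$. First I would apply $d_{\1,0}$ to $[L_n,L_m]=(n-m)L_{n+m}$ and to $[L_n,H_m]=(\frac n2-m)H_{n+m}$; the resulting recursions (after setting $n=0$, and using $m=0$ to fix the constants) should force $a_{H_m,0}=b_{H_m,0}=0$ for all $m$ and pin down $c_{L_m,0}$, $d_{L_m,0}$ as affine functions of $m$ determined by $c_{L_0,0}$, $d_{L_0,0}$ and $\lambda$. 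Then the relation $[L_n,I_m]=(n-m)I_{n+m}$ gives $c_{I_m,0}$ and $d_{I_m,0}$ in terms of $c_{L_0,0}$, and $[L_n,G_m]=(\frac n2-m)G_{n+m}+\lambda(n+1)H_{n+m}$ together with $a_{H_m,0}=b_{H_m,0}=0$ yields $a_{G_m,0}=0$ and $b_{G_m,0}$ as a constant multiple of $c_{L_0,0}$.

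Next I would bring in the brackets among the odd generators. Acting on $[G_n,G_m]=I_{n+m}$ gives $a_{I_{n+m},0}=0$ and $b_{I_{n+m},0}=b_{G_n,0}+b_{G_m,0}$, which (combined with the earlier expression for $b_{G_m,0}$) both checks consistency and yields a further scalar identity. Acting on $[I_n,G_m]=(n-2p)H_{n+m}$—i.e.\ $[I_n,G_m]=(n-2m)H_{n+m}$—relates $c_{H_{n+m},0}$, $d_{H_{n+m},0}$ to $a_{I_n,0}$, $b_{I_n,0}$, $a_{G_m,0}$, $b_{G_m,0}$; here one must be slightly careful because $I_n$ is even while $G_m$ is odd and $d_{\1,0}$ is odd, so the sign factor $(-1)^{[d][x]}$ in \eqref{eq-derivation} is trivial (since $[I_n]=\0$), giving $d_{\1,0}([I_n,G_m])=[d_{\1,0}(I_n),G_m]+[I_n,d_{\1,0}(G_m)]$. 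Finally I would reassemble all the coefficient formulas into explicit expressions for $d_{\1,0}$ on the four families of generators, exhibit the element $x=\frac{c_{L_0,0}}{?}G_0+(\cdots)H_0\in({\mathfrak{L}^0_\lambda})_0$ whose adjoint action reproduces $d_{\1,0}$, and conclude $d_{\1,0}\in\Inn_{\1}({\mathfrak{L}^0_\lambda})\cap\Inn_0({\mathfrak{L}^0_\lambda})$.

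The main obstacle I anticipate is twofold. First, with $k=0$ one cannot divide by $k$, so the ``$n=0$'' substitutions that worked cleanly in Lemma \ref{lemma(0,1,k)} now degenerate, and I would instead have to use the $m=0$ and $n=0$ specializations of \emph{each} functional equation separately (as was done in Lemma \ref{lemma(0,0,0)}) and then solve the two-step recursions $f(m+n)=f(m)+f(n)$-type relations to get linear-in-$m$ solutions, tracking that the would-be free parameters $c_{L_0,0},d_{L_0,0},a_{G_0,0},b_{G_0,0},\dots$ are all consistently determined. Second, and more delicately, one must verify there is \emph{no} genuinely outer odd degree-zero derivation: unlike the even case—where $\partial_1,\partial_2$ survive—here every solution should be inner, so the crux is checking that the system forces $a_{G_0,0}=0$ (no $L_0\leftrightarrow$ odd mixing) and that the remaining parameters $c_{L_0,0}$, $d_{L_0,0}$ are exactly matched by $\ad(c_{L_0,0}G_0+d_{L_0,0}H_0)$ (up to a harmless rescaling from $[G_0,G_0]=I_0$ and $[L_0,G_0]=-G_0+\lambda H_0$, $[I_0,G_0]=0$, $[L_0,H_0]=0$), leaving no residual freedom. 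If that matching is tight the lemma follows; if a stray parameter appeared to survive, one would need one more bracket relation (most likely $[G_n,G_m]=I_{n+m}$ evaluated at a second pair) to kill it.
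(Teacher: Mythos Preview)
Your overall strategy matches the paper's: set up the eight coefficient families, run through the brackets $[L,L]$, $[L,I]$, $[L,H]$, $[L,G]$, $[I,G]$, $[G,G]$, and then match against $\ad$ of an element in $\C G_0\oplus\C H_0$. There is, however, a concrete mis-step in your parametrization. Applying $d_{\1,0}$ to $[L_n,L_0]=nL_n$ immediately forces $c_{L_0,0}=0$ (and the $H$-component of the $[L_n,L_m]$ relation then gives $d_{L_0,0}=2\lambda c_{L_1,0}$, so $d_{L_0,0}=0$ when $\lambda=0$). Hence $c_{L_0,0}$ is \emph{not} the surviving free parameter; the genuine parameters are $c_{L_1,0}$ and $d_{L_1,0}$, with $c_{L_m,0}=m\,c_{L_1,0}$ and $d_{L_m,0}=m\,d_{L_1,0}-(m-1)d_{L_0,0}$. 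Your statements that $d_{I_m,0}$ and $b_{G_m,0}$ are ``constant multiples of $c_{L_0,0}$'' and that the inner element looks like $\tfrac{c_{L_0,0}}{?}G_0+\cdots$ would therefore collapse to zero if followed literally, and you would wrongly conclude $d_{\1,0}=0$. The paper obtains $d_{I_m,0}=2m\,c_{L_1,0}$, $b_{G_m,0}=-2c_{L_1,0}$, and the matching element is $\ad\bigl(-2c_{L_1,0}\,G_0+(8\lambda c_{L_1,0}-2d_{L_1,0})\,H_0\bigr)$.

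Two smaller corrections. First, $[L_0,G_0]=\lambda H_0$, not $-G_0+\lambda H_0$; more generally $\ad G_0(L_n)=-\tfrac{n}{2}G_n-\lambda(n+1)H_n$ and $\ad H_0(L_n)=-\tfrac{n}{2}H_n$, and these are the formulas you must match. Second, your obstacle paragraph correctly foresees that the $k=0$ recursions degenerate and must be handled as in Lemma~\ref{lemma(0,0,0)}; the practical consequence is precisely that the base point of the linear-in-$m$ solutions shifts from $m=0$ to $m=1$, which is what the paper does and what your main text should reflect.
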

\begin{proof}
For any $d_{\1,0}\in\Der_{\1}({\mathfrak{L}^0_\lambda})
\cap\Der_{0}({\mathfrak{L}^0_\lambda})$, we can suppose
\begin{eqnarray}\label{eq(0,1,0)}
\begin{array}{ccccc}
d_{\1,0}(L_{n})\!\!\!&=&\!\!\!c_{L_{n},0}G_{n}
\!\!\!&+&\!\!\!d_{L_{n},0}H_{n},\vs{6pt}\\
d_{\1,0}(I_{n})\!\!\!&=&\!\!\!c_{I_{n},0}G_{n}
\!\!\!&+&\!\!\!d_{I_{n},0}H_{n},\vs{6pt}\\
d_{\1,0}(G_{n})\!\!\!&=&\!\!\!a_{G_{n},0}L_{n}
\!\!\!&+&\!\!\!b_{G_{n},0}I_{n},\vs{6pt}\\
d_{\1,0}(H_{n})\!\!\!&=&\!\!\!a_{H_{n},0}L_{n}\!\!\!&+&\!\!\!b_{H_{n},0}I_{n}.
\end{array}
\end{eqnarray}
Using $[L_{n},L_{m}]=(n-m)L_{n+m}$ and \eqref{eq-derivation}, we obtain
\begin{eqnarray}\label{(0,1,0)-LL-000}
\begin{array}{lll}
&&\!\!\!\!\!\!\!\!\!
(n-m)c_{L_{m+n},0}=(\frac{n}{2}-m)c_{L_{m},0}-(\frac{m}{2}-n)c_{L_{n},0},\vs{6pt}\\
&&\!\!\!\!\!\!\!\!\!
(n-m)d_{L_{m+n},0}=\lambda(n+1)c_{L_{m},0}+(\frac{n}{2}-m)d_{L_{m},0}
-\lambda(m+1)c_{L_{n},0}-(\frac{m}{2}-n)d_{L_{n},0},
\end{array}
\end{eqnarray}
which give $c_{L_{0},0}=0$ and
\begin{eqnarray}\label{(0,1,0)-LL-results-001}
\lambda c_{L_{m},0}=\frac{m}{2}d_{L_{0},0}.
\end{eqnarray}
Then the second equation in \eqref{(0,1,0)-LL-000} becomes
\begin{equation*}
(n-m)d_{L_{m+n},0}=\frac{m-n}{2}d_{L_{0},0}+(\frac{n}{2}-m)d_{L_{m},0}-(\frac{m}{2}-n)d_{L_{n},0}.
\end{equation*}
Furthermore, we can deduce
\begin{eqnarray}\label{(0,1,0)-LL-results-002}
&&c_{L_{m},0}=mc_{L_{1},0},\ \ \ d_{L_{m},0}=md_{L_{1},0}-(m-1)d_{L_{0},0}.
\end{eqnarray}
Using $[L_{n},I_{m}]=(n-m)I_{n+m}$ and \eqref{eq-derivation}, we obtain
\begin{eqnarray*}
&&(n-m)c_{I_{m+n},0}=(\frac{n}{2}-m)c_{I_{m},0},\vs{6pt}\\
&&(n-m)d_{I_{m+n},0}=\lambda(n+1)c_{I_{m},0}
+(\frac{n}{2}-m)d_{I_{m},0}-(m-2n)c_{L_{n},0},
\end{eqnarray*}
from which we can deduce
\begin{eqnarray}\label{(0,1,0)-LI-results-001}
c_{I_{n},0}=\frac{1}{2}c_{I_{0},0},\ \ \lambda c_{I_{k},0}=0,\ \
d_{I_{m},0}=\frac{1}{2}d_{I_{0},0}+2mc_{L_{1},0},\ \
\forall\,\,n\in\Z^*,\ \ \,\,k,\,m\in\Z.
\end{eqnarray}
Using $[L_{n},H_{m}]=(\frac{n}{2}-m)H_{m+n}$ and \eqref{eq-derivation}, we have
\begin{eqnarray*}
\begin{array}{ccc}
(\frac{n}{2}-m)a_{H_{m+n},0}\!\!\!&=&\!\!\!(n-m)a_{H_{m},0},\vs{6pt}\\
(\frac{n}{2}-m)b_{H_{m+n},0}\!\!\!&=&\!\!\!(n-m)b_{H_{m},0},
\end{array}
\end{eqnarray*}
which imply
\begin{eqnarray}\label{(0,1,0)-LH-results}
&&a_{H_{k},0}=2a_{H_{0},0},\ \ \ b_{H_{k},0}=2b_{H_{0},0},\ \ \forall\,\,k\in\Z^*.
\end{eqnarray}
Using $[L_{n},G_{m}]=(\frac{n}{2}-m)G_{m+n}+\lambda(n+1)H_{m+n}$ and \eqref{eq-derivation}, we obtain
\begin{eqnarray*}
\begin{array}{ccccccc}
(\frac{n}{2}-m)a_{G_{m+n},0}\!\!\!&+&\!\!\!\lambda(n+1)a_{H_{m+n},0}
\!\!\!&=&\!\!\!(n-m)a_{G_{m},0},\vs{6pt}\\
(\frac{n}{2}-m)b_{G_{m+n},0}\!\!\!&+&\!\!\!\lambda(n+1)b_{H_{m+n},0}
\!\!\!&=&\!\!\!(n-m)b_{G_{m},0}\!\!\!&+&\!\!\!c_{L_{n},0},
\end{array}
\end{eqnarray*}
which imply
\begin{eqnarray}\label{(0,1,0)-LG-results}
\lambda a_{H_{k},0}=\lambda b_{H_{k},0}=0,\ \,
a_{G_{n},0}=2a_{G_{0},0},\ \,
b_{G_{n},0}=2b_{G_{0},0}+\frac{2}{n}c_{L_{n},0},
\ \,\forall\,\,k\in\Z,\,\,n\in\Z^*.
\end{eqnarray}
Using $[I_{n},G_{m}]=(n-2m)H_{m+n}$ and \eqref{eq-derivation}, we have
\begin{eqnarray*}
&&(n-2m)a_{H_{m+n},0}=0,\ \ \ (n-2m)b_{H_{m+n},0}=(n-m)a_{G_{m},0}+c_{I_{n},0},
\end{eqnarray*}
which imply
\begin{eqnarray}\label{(0,1,0)-IG-results}
&&a_{H_{k},0}=a_{G_{k},0}=c_{I_{k},0}=0,\ \,\forall\,\,k\in\Z.
\end{eqnarray}
Using $[G_{n},G_{m}]=I_{m+n}$ and \eqref{eq-derivation}, we have
\begin{eqnarray*}
\begin{array}{ccccc}
c_{I_{m+n},0}\!\!\!&=&\!\!\!\!\!\!\!\!\!
(\frac{n}{2}-m)a_{G_{n},0}+(\frac{m}{2}-n)a_{G_{m},0},\vs{6pt}\\
d_{I_{m+n},0}\!\!\!&=&\!\!\!\!\!\!\!\!\!
\lambda(n+1)a_{G_{n},0}+(n-2m)b_{G_{n},0}\vs{6pt}\\
&&+\lambda(m+1)a_{G_{m},0}+(m-2n)b_{G_{m},0}.
\end{array}
\end{eqnarray*}
Together with \eqref{(0,1,0)-IG-results}, we can deduce
\begin{equation*}
d_{I_{0},0}=-4(m+n)(b_{G_{0},0}+2c_{L_{1},0}),
\end{equation*}
which implies
\begin{eqnarray}\label{(0,1,0)-GG-results}
&&d_{I_{0},0}=0,\ \ \ b_{G_{0},0}=-2c_{L_{1},0}.
\end{eqnarray}

Combining the identities presented in \eqref{(0,1,0)-LL-results-001}--\eqref{(0,1,0)-GG-results}, we can deduce
\begin{eqnarray*}
&&d_{\1,0}(L_{k})=kc_{L_{1},0}G_{k}+(kd_{L_{1},0}-2\lambda (k-1)c_{L_{1},0})H_{k},\vs{6pt}\\
&&d_{\1,0}(I_{k})=2kc_{L_{1},0}H_{k},\ \ \
d_{\1,0}(G_{k})=-2c_{L_{1},0}I_{k},\ \ \ d_{\1,0}(H_{k})=0.
\end{eqnarray*}
It is easy to verify that $d_{\1,0}=\ad(-2c_{L_{1},0}G_{0}+(8\lambda c_{L_{1},0}-2d_{L_{1},0})H_{0})$, which imply
$\Der_{\1}({\mathfrak{L}^0_\lambda})
\cap\Der_{0}({\mathfrak{L}^0_\lambda})
=\Inn_{\1}({\mathfrak{L}^0_\lambda})\cap\Inn_{0}({\mathfrak{L}^0_\lambda})$.
\end{proof}

\begin{lemm}\label{lemma(0.5,0,k)}
$\Der_{\0}({\mathfrak{L}^{\frac{1}{2}}_\lambda})
\cap\Der_{k}({\mathfrak{L}^{\frac{1}{2}}_\lambda})=\Inn_{\0}({\mathfrak{L}^{\frac{1}{2}}_\lambda})\cap\Inn_{k}({\mathfrak{L}^{\frac{1}{2}}_\lambda})$ for any $k\in\Z^*$.
\end{lemm}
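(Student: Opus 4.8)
The plan is to run the proof of Lemma \ref{lemma(0,0,k)} essentially verbatim, the only structural change being that the odd generators now carry half-integer indices. Since $k\in\Z^*$, a degree-$k$ even derivation $d_{\0,k}$ preserves both the integer part $\bigoplus_{n\in\Z}(\C L_n\oplus\C I_n)$ and the half-integer part $\bigoplus_{p\in\frac12+\Z}(\C G_p\oplus\C H_p)$ of $\mathfrak{L}^{\frac{1}{2}}_\lambda$, so I would begin from the ansatz
\begin{eqnarray*}
&&d_{\0,k}(L_n)=a_{L_n,k}L_{n+k}+b_{L_n,k}I_{n+k},\ \ \ d_{\0,k}(I_n)=a_{I_n,k}L_{n+k}+b_{I_n,k}I_{n+k},\\
&&d_{\0,k}(G_p)=c_{G_p,k}G_{p+k}+d_{G_p,k}H_{p+k},\ \ \ d_{\0,k}(H_p)=c_{H_p,k}G_{p+k}+d_{H_p,k}H_{p+k},
\end{eqnarray*}
with $n\in\Z$ and $p\in\frac12+\Z$.

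The next step is to push $d_{\0,k}$ through the four bracket relations having an $L$ on one side, and in each case to specialize the integer index to $0$ (so that $L_0$ acts as a grading element). Acting on $[L_n,L_m]=(n-m)L_{n+m}$ and $[L_n,I_m]=(n-m)I_{n+m}$ and putting $n=0$ should reproduce \eqref{(0,0,k)-LL} and \eqref{(0,0,k)-LI}, i.e.\ $a_{L_m,k}=(1-\frac mk)a_{L_0,k}$, $b_{L_m,k}=(1-\frac mk)b_{L_0,k}$, $a_{I_m,k}=0$, $b_{I_m,k}=(1-\frac mk)a_{L_0,k}$. Acting on $[L_n,H_p]=(\frac n2-p)H_{n+p}$ and $[L_n,G_p]=(\frac n2-p)G_{n+p}+\lambda(n+1)H_{n+p}$ and again putting $n=0$ should reproduce \eqref{(0,0,k)-LH} and \eqref{(0,0,k)-LG}, i.e.\ $c_{H_p,k}=0$, $d_{H_p,k}=(\frac12-\frac pk)a_{L_0,k}$, $c_{G_p,k}=(\frac12-\frac pk)a_{L_0,k}$, $d_{G_p,k}=\lambda(1+\frac1k)a_{L_0,k}+(1-\frac{2p}{k})b_{L_0,k}$. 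Every division by $k$ is harmless because $k\in\Z^*$.

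Finally, with these closed forms in hand one verifies, exactly as in the closing line of Lemma \ref{lemma(0,0,k)}, that $d_{\0,k}$ agrees with $\ad(\frac{a_{L_0,k}}{k}L_k+\frac{b_{L_0,k}}{k}I_k)$ on every generator $L_n,I_n,G_p,H_p$, hence on all of $\mathfrak{L}^{\frac{1}{2}}_\lambda$; this gives $\Der_{\0}(\mathfrak{L}^{\frac{1}{2}}_\lambda)\cap\Der_k(\mathfrak{L}^{\frac{1}{2}}_\lambda)\subseteq\Inn_{\0}(\mathfrak{L}^{\frac{1}{2}}_\lambda)\cap\Inn_k(\mathfrak{L}^{\frac{1}{2}}_\lambda)$, and the reverse inclusion is trivial. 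I do not anticipate a real obstacle: the half-integer index shifts propagate through the recursions without incident, and the two brackets left untouched above, $[G_p,G_q]=I_{p+q}$ and $[I_m,G_p]=(m-2p)H_{p+m}$, yield no new conditions since $\ad$ of any element automatically respects them. The one thing to stay alert to is the bookkeeping, namely making sure each Leibniz step deposits its shifted index in the correct ($\Z$ or $\frac12+\Z$) component.
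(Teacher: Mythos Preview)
Your proposal is correct and follows essentially the same approach as the paper: set up the ansatz, push the derivation through the four $[L_n,\cdot]$ brackets with $n=0$, solve for the coefficients, and identify the result as $\ad(\frac{a_{L_0,k}}{k}L_k+\frac{b_{L_0,k}}{k}I_k)$. The only cosmetic difference is that the paper writes the odd indices as $m+\frac12$ with $m\in\Z$ rather than $p\in\frac12+\Z$, so your formula $d_{H_p,k}=(\frac12-\frac{p}{k})a_{L_0,k}$ appears there as $d_{H_{m+\frac12},k}=(\frac12-\frac{1}{2k}-\frac{m}{k})a_{L_0,k}$, and similarly for the other odd coefficients.
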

\begin{proof}
For any $d_{\0,k}\in\Der_{\0}({\mathfrak{L}^{\frac{1}{2}}_\lambda})
\cap\Der_{k}({\mathfrak{L}^{\frac{1}{2}}_\lambda})$, we can suppose
\begin{eqnarray}\label{eq(0.5,0,k)}
\begin{array}{llllll}
&&d_{\0,k}(L_{n})=a_{L_{n},k}L_{n+k}
+b_{L_{n},k}I_{n+k},\vs{6pt}\\
&&d_{\0,k}(I_{n})=a_{I_{n},k}L_{n+k}
+b_{I_{n},k}I_{n+k},\vs{6pt}\\
&&d_{\0,k}(G_{n+\frac{1}{2}})
=c_{G_{n+\frac{1}{2}},k}G_{n+k+\frac{1}{2}}
+d_{G_{n+\frac{1}{2}},k}H_{n+k+\frac{1}{2}},\vs{6pt}\\
&&d_{\0,k}(H_{n+\frac{1}{2}})
=c_{H_{n+\frac{1}{2}},k}G_{n+k+\frac{1}{2}}
+d_{H_{n+\frac{1}{2}},k}H_{n+k+\frac{1}{2}}.
\end{array}
\end{eqnarray}
Acting $d_{\0,k}$ on both sides of $[L_{n},L_{m}]=(n-m)L_{n+m}$, we obtain
\begin{eqnarray*}
&&(n-m)a_{L_{m+n},k}=(n+k-m)a_{L_{n},k}+(n-m-k)a_{L_{m},k},\vs{6pt}\\
&&(n-m)b_{L_{m+n},k}=(n+k-m)b_{L_{n},k}+(n-m-k)b_{L_{m},k}.
\end{eqnarray*}
Taking $n=0$, one has
\begin{eqnarray}\label{(0.5,0,k)-LL}
&&a_{L_{m},k}=(1-\frac{m}{k})a_{L_{0},k},\ \ \ b_{L_{m},k}=(1-\frac{m}{k})b_{L_{0},k}.
\end{eqnarray}
Using $[L_{n},I_{m}]=(n-m)I_{n+m}$ and \eqref{eq(0,0,k)}, we obtain
\begin{eqnarray*}
&&(n-m)a_{I_{m+n},k}=(n-m-k)a_{I_{m},k},\vs{6pt}\\
&&(n-m)b_{I_{m+n},k}=(n+k-m)a_{L_{n},k}+(n-m-k)b_{I_{m},k},
\end{eqnarray*}
which give
\begin{eqnarray*}
&&a_{I_{m},k}=0,\ \ \ b_{I_{m},k}=(1-\frac{m}{k})a_{L_{0},k}.
\end{eqnarray*}
Using $[L_{n},H_{m+\frac{1}{2}}]=(\frac{n-1}{2}-m)H_{n+m+\frac{1}{2}}$ and \eqref{eq(0.5,0,k)}, we obtain
\begin{eqnarray*}
(\frac{n-1}{2}-m)c_{H_{m+n+\frac{1}{2}},k}
\!\!\!&=&\!\!\!(\frac{n-1}{2}-m-k)c_{H_{m+\frac{1}{2}},k},\vs{6pt}\\
(\frac{n-1}{2}-m)d_{H_{m+n+\frac{1}{2}},k}
\!\!\!&=&\!\!\!(\frac{n+k-1}{2}-m)a_{L_{n},k}+\lambda(n+1)c_{H_{m+\frac{1}{2}},k}\vs{6pt}\\
&&+(\frac{n-1}{2}-m-k)d_{H_{m+\frac{1}{2}},k},
\end{eqnarray*}
which together with \eqref{(0.5,0,k)-LL} give
\begin{eqnarray}\label{(0.5,0,k)-LH}
&&c_{H_{m+\frac{1}{2}},k}=0,\ \ \
d_{H_{m+\frac{1}{2}},k}=(\frac{1}{2}-\frac{1}{2k}-\frac{m}{k})a_{L_{0},k}.
\end{eqnarray}
Using $[L_{n},G_{m+\frac{1}{2}}]=(\frac{n-1}{2}-m)G_{n+m+\frac{1}{2}}
+\lambda(n+1)H_{m+n+\frac{1}{2}}$ and \eqref{eq(0.5,0,k)}, we obtain
\begin{eqnarray*}
\begin{array}{ccccccc}
(\frac{n-1}{2}-m)c_{G_{m+n+\frac{1}{2}},k}
\!\!\!&+&\!\!\!\lambda(n+1)c_{H_{m+n+\frac{1}{2}},k}
\!\!\!&=&\!\!\!(\frac{n+k-1}{2}-m)a_{L_{n},k}
\!\!\!&+&\!\!\!(\frac{n-1}{2}-m-k)c_{G_{m+\frac{1}{2}},k},\vs{6pt}\\
(\frac{n-1}{2}-m)d_{G_{m+n+\frac{1}{2}},k}
\!\!\!&+&\!\!\!\lambda(n+1)d_{H_{m+n+\frac{1}{2}},k}
\!\!\!&=&\!\!\!\lambda(n+k+1)a_{L_{n},k}\!\!\!&+&\!\!\!(n+k-2m-1)b_{L_{n},k}\\
& & & & +\lambda(n+1)c_{G_{m+\frac{1}{2}},k}\!\!\!&+&\!\!\!
(\frac{n-1}{2}-m-k)d_{G_{m+\frac{1}{2}},k},
\end{array}
\end{eqnarray*}
which together with \eqref{(0.5,0,k)-LL} and \eqref{(0.5,0,k)-LH} give
\begin{eqnarray}\label{(0.5,0,k)-LG}
\begin{array}{lll}
&&c_{G_{m+\frac{1}{2}},k}=(\frac{1}{2}-\frac{1}{2k}-\frac{m}{k})a_{L_{0},k},\vs{6pt}\\
&&d_{G_{m+\frac{1}{2}},k}=\lambda(1+\frac{1}{k})a_{L_{0},k}
+(1-\frac{2m}{k}-\frac{1}{k})b_{L_{0},k}.
\end{array}
\end{eqnarray}
Combining the identities presented in \eqref{(0.5,0,k)-LL}--\eqref{(0.5,0,k)-LG}, we obtain
\begin{eqnarray*}
&&d_{\0,k}(L_{n})=(1-\frac{n}{k})a_{L_{0},k}L_{n+k}
+(1-\frac{n}{k})b_{L_{0},k}I_{n+k},\vs{6pt}\\
&&d_{\0,k}(I_{n})=(1-\frac{n}{k})a_{L_{0},k}I_{n+k},\ \ \
d_{\0,k}(H_{n})=(\frac{1}{2}-\frac{n}{k})a_{L_{0},k}H_{n+k},\vs{6pt}\\
&&d_{\0,k}(G_{n+\frac{1}{2}})=(\frac{1}{2}-\frac{1}{2k}
-\frac{n}{k})a_{L_{0},k}G_{n+k+\frac{1}{2}}+
(\lambda(1+\frac{1}{k})a_{L_{0},k}+(1-\frac{1}{k}-\frac{2n}{k})b_{L_{0},k})
H_{n+k+\frac{1}{2}}.
\end{eqnarray*}
It is easy to verify that $d_{\0,k}=\ad(\frac{a_{L_{0},k}}{k}L_{k}+\frac{b_{L_{0},k}}{k}I_{k})$, which implies $\Der_{\0}({\mathfrak{L}^{\frac{1}{2}}_\lambda})
\cap\Der_{k}({\mathfrak{L}^{\frac{1}{2}}_\lambda})
=\Inn_{\0}({\mathfrak{L}^{\frac{1}{2}}_\lambda})
\cap\Inn_{k}({\mathfrak{L}^{\frac{1}{2}}_\lambda})$ for any $k\in\Z^*$.
\end{proof}

Define $\partial_{3}$ and $\partial_{4}$ as follows:
\begin{eqnarray}\label{partial34}
\begin{array}{lll}
&&\partial_{3}(L_{k})=0, \partial_{3}(I_{k})=2I_{k}, \partial_{3}(G_{k+\frac{1}{2}})=G_{k+\frac{1}{2}}, \partial_{3}(H_{k+\frac{1}{2}})=3H_{k+\frac{1}{2}},\vs{6pt}\\
&&\partial_{4}(L_{k})=0, \partial_{4}(I_{k})=0, \partial_{4}(G_{k+\frac{1}{2}})=H_{k+\frac{1}{2}}, \partial_{4}(H_{k+\frac{1}{2}})=0.
\end{array}
\end{eqnarray}
It is easy to see that $\partial_{3}\in\Der_{\0}({\mathfrak{L}^{\frac{1}{2}}_0})
\cap\Der_{0}({\mathfrak{L}^{\frac{1}{2}}_0})$ and $\partial_{4}\in \Der_{\0}({\mathfrak{L}^{\frac{1}{2}}_\lambda})
\cap\Der_{0}({\mathfrak{L}^{\frac{1}{2}}_\lambda})$ for any $\lambda\in\C$.

\begin{lemm}\label{lemma(0.5,0,0)}
$\Der_{\0}({\mathfrak{L}^{\frac{1}{2}}_\lambda})
\cap\Der_{0}({\mathfrak{L}^{\frac{1}{2}}_\lambda})=
\Inn_{\0}({\mathfrak{L}^{\frac{1}{2}}_\lambda})
\cap\Inn_{0}({\mathfrak{L}^{\frac{1}{2}}_\lambda})
\oplus\C\delta_{\lambda,0}\partial_{3}\oplus\C\partial_{4}$.
\end{lemm}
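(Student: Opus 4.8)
The plan is to run, almost verbatim, the argument of Lemma \ref{lemma(0,0,0)}, the only change being that the odd generators are now indexed by $\frac{1}{2}+\Z$, so one works with the relations $[L_n,H_{m+\frac12}]=(\frac{n-1}{2}-m)H_{n+m+\frac12}$, $[L_n,G_{m+\frac12}]=(\frac{n-1}{2}-m)G_{n+m+\frac12}+\lambda(n+1)H_{n+m+\frac12}$, $[I_n,G_{m+\frac12}]=(n-2m-1)H_{n+m+\frac12}$ and $[G_{n+\frac12},G_{m+\frac12}]=I_{n+m+1}$, carrying the extra halves through all the computations. Fix $d_{\0,0}\in\Der_{\0}({\mathfrak{L}^{\frac{1}{2}}_\lambda})\cap\Der_{0}({\mathfrak{L}^{\frac{1}{2}}_\lambda})$ and write it in the ansatz \eqref{eq(0.5,0,k)} with $k=0$. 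Acting on $[L_n,L_m]=(n-m)L_{n+m}$ and $[L_n,I_m]=(n-m)I_{n+m}$ uses only the even part $W(2,2)$, hence reproduces exactly the relations from the proof of Lemma \ref{lemma(0,0,0)}: $a_{L_k,0}=ka_{L_1,0}$, $b_{L_k,0}=kb_{L_1,0}$, $a_{I_k,0}=a_{I_0,0}$ and $b_{I_k,0}=b_{I_0,0}+ka_{L_1,0}$ for all $k\in\Z$.

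Next I would act on $[L_n,H_{m+\frac12}]$: since $\frac{n-1}{2}-m\neq 0$ for all but one value of $n$ with $n+m$ fixed, and these shifts connect all of $\frac{1}{2}+\Z$, one gets that $c_{H_{p},0}$ is a single constant $\gamma$ with $\lambda\gamma=0$, and that $d_{H_{p},0}$ depends affinely on the index with slope $a_{L_1,0}$. Acting on $[L_n,G_{m+\frac12}]$ then forces, from its $G$-component and $\lambda\gamma=0$, that $c_{G_{m+\frac12},0}=ma_{L_1,0}+c_{G_{1/2},0}$, and from its $H$-component (taking $n=0$ and $m=0$, just as in \eqref{(0,0,0)-LG-results002}--\eqref{(0,0,0)-LG-results003}) a scalar identity, which here comes out as $\lambda(2c_{G_{1/2},0}-a_{L_1,0})=0$, together with $d_{G_{m+\frac12},0}=2mb_{L_1,0}+d_{G_{1/2},0}$. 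Acting on $[I_n,G_{m+\frac12}]$ gives, as in \eqref{(0,0,0)-IG-results}, $a_{I_0,0}=2\gamma$ and a relation of the form $d_{H_{1/2},0}=b_{I_0,0}+c_{G_{1/2},0}$; and acting on $[G_{n+\frac12},G_{m+\frac12}]$ gives, as in \eqref{(0,0,0)-GG-results}, $a_{I_k,0}=0$ (hence $\gamma=0$, so $c_{H_{p},0}=0$) and $b_{I_0,0}=2c_{G_{1/2},0}-a_{L_1,0}$.

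At this stage $d_{\0,0}$ is completely determined by the four scalars $a_{L_1,0},b_{L_1,0},c_{G_{1/2},0},d_{G_{1/2},0}$, with the extra constraint $c_{G_{1/2},0}=\tfrac12 a_{L_1,0}$ in force precisely when $\lambda\neq 0$. One checks that the four derivations $\ad L_0,\ad I_0,\partial_3,\partial_4$ have parameter vectors $(a_{L_1,0},b_{L_1,0},c_{G_{1/2},0},d_{G_{1/2},0})$ equal to $(-1,0,-\tfrac12,\lambda)$, $(0,-1,0,-1)$, $(0,0,1,0)$, $(0,0,0,1)$ respectively, and that these are linearly independent; hence when $\lambda=0$ they span the whole four-dimensional parameter space, while when $\lambda\neq 0$ the three derivations $\ad L_0,\ad I_0,\partial_4$ (all of which lie in the constrained subspace, since $\partial_3$ is not a derivation of ${\mathfrak{L}^{\frac{1}{2}}_\lambda}$ for $\lambda\neq 0$) span it. Therefore $d_{\0,0}=\ad(\alpha L_0+\beta I_0)+\mu\partial_4+\delta_{\lambda,0}\nu\partial_3$ for suitable scalars $\alpha,\beta,\mu,\nu$; combined with $\Inn_{\0}({\mathfrak{L}^{\frac{1}{2}}_\lambda})\cap\Inn_{0}({\mathfrak{L}^{\frac{1}{2}}_\lambda})=\C\ad L_0\oplus\C\ad I_0$ and the above linear independence, this gives the asserted direct-sum decomposition. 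The only genuine difficulty is the half-integer bookkeeping — verifying that the chain arguments really do force the ``constant'' coefficients to be constant over $\frac{1}{2}+\Z$, and that the scalar identity singling out the $\partial_3$-direction emerges as $\lambda(2c_{G_{1/2},0}-a_{L_1,0})=0$ (so that $\partial_3$ survives exactly when $\lambda=0$) rather than a naive $\lambda c_{G_{1/2},0}=0$.
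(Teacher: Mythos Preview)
Your proposal is correct and follows essentially the same approach as the paper: set up the generic degree-$0$ even derivation, run through the brackets $[L,L]$, $[L,I]$, $[L,H]$, $[L,G]$, $[I,G]$, $[G,G]$ in the same order, and solve the resulting linear system for the coefficients. The only difference is cosmetic: the paper parametrizes the answer by $(a_{L_1,0},b_{L_1,0},b_{I_0,0},d_{G_{1/2},0})$ and writes down the decomposition $d_{\0,0}=\ad(-a_{L_1,0}L_0-b_{L_1,0}I_0)+d_{G_{1/2},0}\partial_4+\tfrac12\delta_{\lambda,0}b_{I_0,0}\partial_3$ directly, whereas you use $(a_{L_1,0},b_{L_1,0},c_{G_{1/2},0},d_{G_{1/2},0})$ and finish by computing the parameter vectors of $\ad L_0,\ad I_0,\partial_3,\partial_4$ and invoking linear independence. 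Since $b_{I_0,0}=2c_{G_{1/2},0}-a_{L_1,0}$, the two parametrizations are equivalent, and your constraint $\lambda(2c_{G_{1/2},0}-a_{L_1,0})=0$ is exactly the paper's implicit $\lambda b_{I_0,0}=0$.
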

\begin{proof}
For any $d_{\0,0}\in\Der_{\0}({\mathfrak{L}^{\frac{1}{2}}_\lambda})\cap\Der_{0}({\mathfrak{L}^{\frac{1}{2}}_\lambda})$, we can suppose
\begin{eqnarray*}
&&d_{\0,0}(L_{n})=a_{L_{n},0}L_{n}+b_{L_{n},0}I_{n},\vs{6pt}\\
&&d_{\0,0}(I_{n})=a_{I_{n},0}L_{n}+b_{I_{n},0}I_{n},\vs{6pt}\\
&&d_{\0,0}(G_{n+\frac{1}{2}})=c_{G_{n+\frac{1}{2}},0}G_{n+\frac{1}{2}}
+d_{G_{n+\frac{1}{2}},0}H_{n+\frac{1}{2}},\vs{6pt}\\
&&d_{\0,0}(H_{n+\frac{1}{2}})=c_{H_{n+\frac{1}{2}},0}G_{n+\frac{1}{2}}
+d_{H_{n+\frac{1}{2}},0}H_{n+\frac{1}{2}}.
\end{eqnarray*}
Using $[L_{n},L_{m}]=(n-m)L_{n+m}$ and \eqref{eq-derivation}, we obtain
\begin{eqnarray}\label{(0.5,0,0)-LL-000}
\begin{array}{ccccc}
(n-m)a_{L_{m+n},0}\!\!\!&=&\!\!\!(n-m)a_{L_{n},0}
\!\!\!&+&\!\!\!(n-m)a_{L_{m},0},\vs{6pt}\\
(n-m)b_{L_{m+n},0}\!\!\!&=&\!\!\!(n-m)b_{L_{n},0}
\!\!\!&+&\!\!\!(n-m)b_{L_{m},0},
\end{array}
\end{eqnarray}
which imply
\begin{eqnarray*}
&&a_{L_{m+n},0}=a_{L_{n},0}+a_{L_{m},0},\ \ \
b_{L_{m+n},0}=b_{L_{n}}+b_{L_{m},0},\ \ m\neq n.
\end{eqnarray*}
Then we can deduce
\begin{eqnarray}\label{(0.5,0,0)-LL}
\begin{array}{lll}
&&a_{L_{k},0}=a_{L_{2},0}+(k-2)a_{L_{1},0},\vs{6pt}\\
&&a_{L_{-k},0}=a_{L_{-2},0}+(k-2)a_{L_{-1},0},\ \ \forall\,\,k\geq2,
\end{array}
\end{eqnarray}
which together with $a_{L_{-k},0}+a_{L_{k},0}=a_{L_{0},0}$ give $a_{L_{0},0}=0$ and \begin{eqnarray}\label{(0.5,0,0)-LL-001}
a_{L_{k},0}=-a_{L_{-k},0}.
\end{eqnarray}
According to $a_{L_{1},0}=a_{L_{2},0}+a_{L_{-1},0}$ and \eqref{(0.5,0,0)-LL-001}, we know $a_{L_{2},0}=2a_{L_{1},0}$. Then $a_{L_{k},0}=ka_{L_{1},0}$ for $k\in\Z$. Similarly, we can deduce
\begin{eqnarray}\label{(0.5,0,0)-LL-results}
&&a_{L_{k},0}=ka_{L_{1},0},\ \ \ b_{L_{k},0}=kb_{L_{1},0},\ \ \forall\,\,k\in\Z.
\end{eqnarray}
Using $[L_{n},I_{m}]=(n-m)I_{n+m}$ and \eqref{eq-derivation}, we obtain
\begin{eqnarray*}
\begin{array}{ccccc}
(n-m)a_{I_{m+n},0}\!\!\!&=&\!\!\!(n-m)a_{I_{m},0},\vs{6pt}\\
(n-m)b_{I_{m+n},0}\!\!\!&=&\!\!\!(n-m)a_{L_{n},0}\!\!\!&+&\!\!\!(n-m)b_{I_{m},0},
\end{array}
\end{eqnarray*}
which together with \eqref{(0.5,0,0)-LL-results} give
\begin{eqnarray}\label{(0.5,0,0)-LI-results}
&&a_{I_{k},0}=a_{I_{0},0},\ \ \ b_{I_{k},0}=b_{I_{0},0}+ka_{L_{1},0},
\ \ \forall\,\,k\in\Z.
\end{eqnarray}
Using $[L_{n},H_{m+\frac{1}{2}}]=(\frac{n-1}{2}-m)H_{n+m+\frac{1}{2}}$ and \eqref{eq-derivation}, we have
\begin{eqnarray*}
&&\!\!\!\!\!\!\!\!\!
(\frac{n-1}{2}-m)c_{H_{m+n+\frac{1}{2}},0}
=(\frac{n-1}{2}-m)c_{H_{m+\frac{1}{2}},0},\vs{6pt}\\
&&\!\!\!\!\!\!\!\!\!
(\frac{n-1}{2}-m)d_{H_{m+n+\frac{1}{2}},0}
=(\frac{n-1}{2}-m)a_{L_{n},0}+\lambda(n+1)c_{H_{m+\frac{1}{2},0}}
+(\frac{n-1}{2}-m)d_{H_{m+\frac{1}{2}},0},
\end{eqnarray*}
which together with \eqref{(0.5,0,0)-LL-results} give
\begin{eqnarray}\label{(0.5,0,0)-LH-results}
&&c_{H_{k+\frac{1}{2}},0}=c_{H_{\frac{1}{2}},0},\ \ \
\lambda c_{H_{\frac{1}{2}},0}=0,\ \ \
d_{H_{k+\frac{1}{2}},0}=ka_{L_{1},0}+d_{H_{\frac{1}{2}},0},\ \ \forall\,\,k\in\Z.
\end{eqnarray}
Using $[L_{n},G_{m+\frac{1}{2}}]=(\frac{n-1}{2}-m)G_{n+m+\frac{1}{2}}+\lambda (n+1)H_{n+m+\frac{1}{2}}$ and \eqref{eq-derivation}, we obtain
\begin{eqnarray}\label{(0.5,0,0)-LG-000}
\begin{array}{ccccccc}
(\frac{n-1}{2}-m)c_{G_{m+n+\frac{1}{2}},0}
\!+\!\lambda(n+1)c_{H_{m+n+\frac{1}{2}},0}
\!\!\!&=&\!\!\!\!\!\!\!\!\!
(\frac{n-1}{2}-m)a_{L_{n},0}
\!+\!(\frac{n-1}{2}-m)c_{G_{m+\frac{1}{2}},0},\vs{6pt}\\
(\frac{n-1}{2}-m)d_{G_{m+n+\frac{1}{2}},0}+\lambda (n+1)d_{H_{m+n+\frac{1}{2}},0}\!\!\!&=&\!\!\!\!\!\!\!\!\!\!\!\!
\lambda(n+1)a_{L_{n},0}\!+\!(n-2m-1)b_{L_{n},0},\vs{6pt}\\
&&\!+\!\lambda(n+1)c_{G_{m+\frac{1}{2}},0}
\!+\!(\frac{n-1}{2}-m)d_{G_{m+\frac{1}{2}},0},
\end{array}
\end{eqnarray}
from which we can deduce
\begin{eqnarray}\label{(0.5,0,0)-LG-results001}
\begin{array}{lll}
&&c_{G_{k+\frac{1}{2}},0}=ka_{L_{1},0}+c_{G_{\frac{1}{2}},0},\vs{6pt}\\
&&\lambda d_{H_{m+\frac{1}{2}},0}=\lambda c_{G_{m+\frac{1}{2}},0},\vs{6pt}\\
&&d_{G_{n+\frac{1}{2}},0}=2nb_{L_{1},0}+d_{G_{\frac{1}{2}},0},\ \ \forall\,\,k,\,m,\,n\in\Z.
\end{array}
\end{eqnarray}
Acting $d_{\0,0}$ on both sides of $[I_{n},G_{m+\frac{1}{2}}]=(n-2m-1)H_{n+m+\frac{1}{2}}$, we obtain
\begin{eqnarray}\label{(0.5,0,0)-IG}
\begin{array}{lll}
&&(n-2m-1)c_{H_{m+n+\frac{1}{2}},0}=(\frac{n-1}{2}-m)a_{I_{n},0},\vs{6pt}\\
&&(n-2m-1)d_{H_{m+n+\frac{1}{2}},0}=\lambda(n+1)a_{I_{n},0}
+(n-2m-1)(b_{I_{n},0}+c_{G_{m+\frac{1}{2}},0}),
\end{array}
\end{eqnarray}
from which we can deduce
\begin{eqnarray}\label{(0.5,0,0)-IG-results}
&&c_{H_{k+\frac{1}{2}},0}=\frac{a_{I_{0},0}}{2},\ \ \
d_{H_{\frac{1}{2}},0}=\frac{3b_{I_{0},0}+a_{L_{1},0}}{2},\ \ \forall\,\,k\in\Z.
\end{eqnarray}
Using $[G_{n+\frac{1}{2}},G_{m+\frac{1}{2}}]=I_{m+n+1}$ and \eqref{eq-derivation}, we obtain
\begin{eqnarray*}
&&a_{I_{m+n+1},0}=0,\ \ \
b_{I_{m+n+1},0}=c_{G_{m+\frac{1}{2}},0}+c_{G_{n+\frac{1}{2}},0},
\end{eqnarray*}
which imply
\begin{eqnarray}\label{(0.5,0,0)-GG-results}
&&a_{I_{k},0}=0,\ \ \
c_{G_{\frac{1}{2}},0}=\frac{b_{I_{0},0}+a_{L_{1},0}}{2},\ \ \forall\,\,k\in\Z.
\end{eqnarray}

Combining the identities presented in \eqref{(0.5,0,0)-LL-results}--\eqref{(0.5,0,0)-GG-results}, we can deduce
\begin{eqnarray}\label{(0.5,0,0)-results}
\begin{array}{lll}
&&d_{\0,0}(L_{k})=ka_{L_{1},0}L_{k}+kb_{L_{1},0}I_{k},\vs{6pt}\\
&&d_{\0,0}(G_{k+\frac{1}{2}})
=(ka_{L_{1},0}+\frac{b_{I_{0},0}+a_{L_{1},0}}{2})G_{k}
+(2kb_{L_{1},0}+d_{G_{\frac{1}{2}},0})H_{k+\frac{1}{2}},\vs{6pt}\\
&&d_{\0,0}(I_{k})=(ka_{L_{1},0}+b_{I_{0},0})I_{k},\ \ \ \
d_{\0,0}(H_{k+\frac{1}{2}})=(ka_{L_{1},0}+\frac{3b_{I_{0},0}+a_{L_{1},0}}{2})H_{k}.
\end{array}
\end{eqnarray}
It is easy to see that $d_{\0,0}=\ad(-a_{L_{1},0}L_{0}-b_{L_{1},0}I_{0})+ d_{G_{\frac{1}{2}},0}\partial_{4}+\frac{1}{2}\delta_{\lambda,0}b_{I_{0},0}
\partial_{3}$, which implies
\[\Der_{\0}({\mathfrak{L}^{\frac{1}{2}}_\lambda})
\cap\Der_{0}({\mathfrak{L}^{\frac{1}{2}}_\lambda})=
\Inn_{\0}({\mathfrak{L}^{\frac{1}{2}}_\lambda})
\cap\Inn_{0}({\mathfrak{L}^{\frac{1}{2}}_\lambda})
\oplus\C\delta_{\lambda,0}\partial_{3}\oplus\C\partial_{4}.\]
\end{proof}

\begin{lemm}\label{lemma(0.5,1,k)}
We have $\Der_{\1}({\mathfrak{L}^{\frac{1}{2}}_\lambda})
\cap\Der_{k+\frac{1}{2}}({\mathfrak{L}^{\frac{1}{2}}_\lambda})
=\Inn_{\1}({\mathfrak{L}^{\frac{1}{2}}_\lambda})
\cap\Inn_{k+\frac{1}{2}}({\mathfrak{L}^{\frac{1}{2}}_\lambda})$.
\end{lemm}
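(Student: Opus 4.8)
The plan is to mimic the structure of Lemma~\ref{lemma(0,1,k)}, which handled the analogous odd-degree-shift-by-integer case for $s=0$. Here the parity is $\odd$ and the degree shift lies in $\frac{1}{2}+\Z$, so an odd derivation $d_{\odd,k+\frac12}$ sends the ``integer part'' $\C L_n\oplus\C I_n$ into the ``half-integer part'' $\C G_{n+k+\frac12}\oplus\C H_{n+k+\frac12}$ and vice versa. First I would write the general ansatz
\begin{eqnarray*}
\begin{array}{llll}
&d_{\odd,k+\frac12}(L_n)=c_{L_n}G_{n+k+\frac12}+d_{L_n}H_{n+k+\frac12},
&d_{\odd,k+\frac12}(I_n)=c_{I_n}G_{n+k+\frac12}+d_{I_n}H_{n+k+\frac12},\vs{6pt}\\
&d_{\odd,k+\frac12}(G_{n+\frac12})=a_{G_{n+\frac12}}L_{n+k+1}+b_{G_{n+\frac12}}I_{n+k+1},
&d_{\odd,k+\frac12}(H_{n+\frac12})=a_{H_{n+\frac12}}L_{n+k+1}+b_{H_{n+\frac12}}I_{n+k+1},
\end{array}
\end{eqnarray*}
(all coefficients implicitly carrying the index $k+\frac12$), and then feed the defining bracket relations into the derivation identity \eqref{eq-derivation}, being careful with the sign $(-1)^{[d][x]}$ since now $[d]=\odd$.

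Second, I would extract recursions by specializing $n=0$ in each resulting identity, exactly as in the earlier lemmas. Acting on $[L_n,L_m]=(n-m)L_{n+m}$ pins down $c_{L_m}$ and $d_{L_m}$ in terms of $c_{L_0},d_{L_0}$ (with a $\lambda$-dependent correction term from the $\lambda(n+1)H$ piece of $[L,G]$, analogous to \eqref{(0,1,k)-LL-results}); acting on $[L_n,I_m]$ forces $c_{I_m}=0$ and expresses $d_{I_m}$ via $c_{L_0}$; acting on $[L_n,H_{m+\frac12}]$ and $[L_n,G_{m+\frac12}]$ forces $a_{H_{m+\frac12}}=b_{H_{m+\frac12}}=a_{G_{m+\frac12}}=0$ and gives $b_{G_{m+\frac12}}$ proportional to $c_{L_0}$, just as in \eqref{(0,1,k)-LH-results}--\eqref{(0,1,k)-LG-results}. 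The relations $[I_n,G_{m+\frac12}]$ and $[G_{n+\frac12},G_{m+\frac12}]=I_{n+m+1}$ then serve as consistency checks (they should impose no new constraints once the above hold). The upshot is that the whole derivation is parametrized by the two scalars $c_{L_0,k+\frac12}$ and $d_{L_0,k+\frac12}$, with an explicit closed form mirroring \eqref{(0,1,k)-results} but with every $\frac{m}{k}$ replaced by $\frac{m}{k+\frac12}$ or similar and appropriate $\pm\frac12$ shifts in the $H$-coefficients.

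Third, I would exhibit the explicit inner derivation realizing $d_{\odd,k+\frac12}$: it should be $\ad\bigl(\alpha G_{k+\frac12}+\beta H_{k+\frac12}\bigr)$ for suitable $\alpha,\beta$ linear in $c_{L_0,k+\frac12},d_{L_0,k+\frac12}$ (by analogy with $\ad(\tfrac{c_{L_0,k}}{k}G_k+(\tfrac{d_{L_0,k}}{k}+\tfrac{\lambda}{k^2}c_{L_0,k})H_k)$ from Lemma~\ref{lemma(0,1,k)}, now with $k$ replaced by $k+\frac12$), and one checks equality on the four families of basis vectors using the structure constants. Since $k+\frac12\neq0$ always, there is no exceptional degree-zero case to separate off, so the conclusion $\Der_{\odd}({\mathfrak{L}^{\frac12}_\lambda})\cap\Der_{k+\frac12}=\Inn_{\odd}({\mathfrak{L}^{\frac12}_\lambda})\cap\Inn_{k+\frac12}$ follows directly for every $k\in\Z$.

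The main obstacle I anticipate is purely bookkeeping: tracking the half-integer shifts correctly (e.g. $[L_n,G_{m+\frac12}]$ has coefficient $\frac{n-1}{2}-m$ rather than $\frac n2-m$, as already used in Lemmas~\ref{lemma(0.5,0,k)} and \ref{lemma(0.5,0,0)}) and getting the sign $(-1)^{[d][x]}$ right in \eqref{eq-derivation} when $x$ is odd. A secondary subtlety is verifying that the $[G,G]$ and $[I,G]$ relations are genuinely redundant rather than (for special $\lambda$) forcing extra vanishing — I expect them to be consistent because $k+\frac12$ never hits the bad values, but this should be checked explicitly rather than asserted. Once the coefficients are nailed down the identification with an explicit $\ad$ is routine.
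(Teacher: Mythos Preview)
Your proposal is correct and follows essentially the same route as the paper's own proof: the same ansatz, the same sequence of bracket relations $[L_n,L_m]$, $[L_n,I_m]$, $[L_n,H_{m+\frac12}]$, $[L_n,G_{m+\frac12}]$ exploited by setting $n=0$, the same conclusion that everything is determined by $c_{L_0,k+\frac12}$ and $d_{L_0,k+\frac12}$, and the same identification with $\ad\bigl(\tfrac{1}{k+\frac12}c_{L_0,k+\frac12}G_{k+\frac12}+(\tfrac{\lambda}{(k+\frac12)^2}c_{L_0,k+\frac12}+\tfrac{1}{k+\frac12}d_{L_0,k+\frac12})H_{k+\frac12}\bigr)$. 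The paper in fact omits the $[I,G]$ and $[G,G]$ consistency checks you plan to carry out, so your outline is if anything slightly more thorough.
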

\begin{proof}
For any $d_{\1,k}\in\Der_{\1}({\mathfrak{L}^{\frac{1}{2}}_\lambda})
\cap\Der_{k+\frac{1}{2}}({\mathfrak{L}^{\frac{1}{2}}_\lambda})$, we can suppose
\begin{eqnarray}\label{eq(0.5,1,k)}
\begin{array}{ccccc}
d_{\1,k+\frac{1}{2}}(L_{n})\!\!\!&=&\!\!\!c_{L_{n},k+\frac{1}{2}}G_{n+k+\frac{1}{2}}
\!\!\!&+&\!\!\!d_{L_{n},k+\frac{1}{2}}H_{n+k+\frac{1}{2}},\vs{6pt}\\
d_{\1,k+\frac{1}{2}}(I_{n})\!\!\!&=&\!\!\!c_{I_{n},k+\frac{1}{2}}G_{n+k+\frac{1}{2}}
\!\!\!&+&\!\!\!d_{I_{n},k+\frac{1}{2}}H_{n+k+\frac{1}{2}},\vs{6pt}\\
d_{\1,k+\frac{1}{2}}(G_{n+\frac{1}{2}})
\!\!\!&=&\!\!\!a_{G_{n+\frac{1}{2}},k+\frac{1}{2}}L_{n+k+1}
\!\!\!&+&\!\!\!b_{G_{n+\frac{1}{2}},k+\frac{1}{2}}I_{n+k+1},\vs{6pt}\\
d_{\1,k+\frac{1}{2}}(H_{n+\frac{1}{2}})
\!\!\!&=&\!\!\!a_{H_{n+\frac{1}{2}},k+\frac{1}{2}}L_{n+k+1}
\!\!\!&+&\!\!\!b_{H_{n+\frac{1}{2}},k+\frac{1}{2}}I_{n+k+1}.
\end{array}
\end{eqnarray}
Using $[L_{n},L_{m}]=(n-m)L_{n+m}$ and \eqref{eq-derivation}, we have
\begin{eqnarray*}
\begin{array}{lll}
(n-m)c_{L_{m+n},k+\frac{1}{2}}
\!\!\!&=&\!\!\!(\frac{n-1}{2}-m-k)c_{L_{m},k+\frac{1}{2}}
-(\frac{m-1}{2}-n-k)c_{L_{n},k+\frac{1}{2}},\vs{6pt}\\
(n-m)d_{L_{m+n},k+\frac{1}{2}}
\!\!\!&=&\!\!\!\lambda(n+1)c_{L_{m},k+\frac{1}{2}}
+(\frac{n-1}{2}-m-k)d_{L_{m},k+\frac{1}{2}}\vs{6pt}\\
&&-\lambda(m+1)c_{L_{n},k+\frac{1}{2}}
-(\frac{m-1}{2}-n-k)d_{L_{n},k+\frac{1}{2}},
\end{array}
\end{eqnarray*}
which imply
\begin{eqnarray}\label{(0.5,1,k)-LL-results}
\begin{array}{lll}
&&c_{L_{m},k+\frac{1}{2}}=(1-\frac{m}{2k+1})c_{L_{0},k+\frac{1}{2}},\vs{6pt}\\
&&d_{L_{m},k+\frac{1}{2}}=-\lambda m(\frac{1}{k+\frac{1}{2}}+\frac{1}{2{(k+\frac{1}{2})}^2})c_{L_{0},k+\frac{1}{2}}
+(1-\frac{m}{2k+1})d_{L_{0},k+\frac{1}{2}}.
\end{array}
\end{eqnarray}
Acting $d_{\1,k+\frac{1}{2}}$ on both sides of $[L_{n},I_{m}]=(n-m)I_{n+m}$, we obtain
\begin{eqnarray*}
\begin{array}{lll}
(n-m)c_{I_{m+n},k+\frac{1}{2}}
\!\!\!&=&\!\!\!(\frac{n-1}{2}-m-k)c_{I_{m},k+\frac{1}{2}},\vs{6pt}\\
(n-m)d_{I_{m+n},k+\frac{1}{2}}\!\!\!&=&\!\!\!\lambda(n+1)c_{I_{m},k+\frac{1}{2}}
+(\frac{n-1}{2}-m-k)d_{I_{m},k+\frac{1}{2}}\vs{6pt}\\
&&-(m-2n-2k-1)c_{L_{n},k+\frac{1}{2}},
\end{array}
\end{eqnarray*}
from which we can deduce
\begin{eqnarray}\label{(0.5,1,k)-LI-results}
&&c_{I_{m},k+\frac{1}{2}}=0,\ \ \ \
d_{I_{m},k+\frac{1}{2}}=(2-\frac{2m}{2k+1})c_{L_{0},k+\frac{1}{2}}.
\end{eqnarray}
Using $[L_{n},H_{m+\frac{1}{2}}]=(\frac{n}{2}-m)H_{n+m+\frac{1}{2}}$ and \eqref{eq-derivation}, we obtain
\begin{eqnarray*}
\begin{array}{ccc}
(\frac{n-1}{2}-m)a_{H_{m+n+\frac{1}{2}},k+\frac{1}{2}}
\!\!\!&=&\!\!\!(n-m-k-1)a_{H_{m+\frac{1}{2}},k+\frac{1}{2}},\vs{6pt}\\
(\frac{n-1}{2}-m)b_{H_{m+n+\frac{1}{2}},k+\frac{1}{2}}
\!\!\!&=&\!\!\!(n-m-k-1)b_{H_{m+\frac{1}{2}},k+\frac{1}{2}},
\end{array}
\end{eqnarray*}
which imply
\begin{eqnarray}\label{(0.5,1,k)-LH-results}
&&a_{H_{m+\frac{1}{2}},k+\frac{1}{2}}=b_{H_{m+\frac{1}{2}},k+\frac{1}{2}}=0.
\end{eqnarray}
Using $[L_{n},G_{m+\frac{1}{2}}]=(\frac{n}{2}-m)G_{n+m+\frac{1}{2}}
+\lambda(n+1)H_{m+n+\frac{1}{2}}$ and \eqref{eq-derivation}, we obtain
\begin{eqnarray*}
&&\!\!\!\!\!\!\!\!\!
(\frac{n-1}{2}-m)a_{G_{m+n+\frac{1}{2}},k+\frac{1}{2}}
\!+\!\lambda(n+1)a_{H_{m+n+\frac{1}{2}},k+\frac{1}{2}}
\!=\!(n-m-k-1)a_{G_{m+\frac{1}{2}},k+\frac{1}{2}},\vs{6pt}\\
&&\!\!\!\!\!\!\!\!\!
(\frac{n-1}{2}-m)b_{G_{m+n+\frac{1}{2}},k+\frac{1}{2}}
\!+\!\lambda(n+1)b_{H_{m+n+\frac{1}{2}},k+\frac{1}{2}}
\!=\!(n-m-k-1)b_{G_{m+\frac{1}{2}},k+\frac{1}{2}}
\!+\!c_{L_{n},k+\frac{1}{2}},
\end{eqnarray*}
from which we can deduce
\begin{eqnarray}\label{(0.5,1,k)-LG-results}
&&a_{G_{m+\frac{1}{2}},k+\frac{1}{2}}=0,\ \ \ \ \
b_{G_{m+\frac{1}{2}},k+\frac{1}{2}}=\frac{2}{2k+1}c_{L_{0},k+\frac{1}{2}}.
\end{eqnarray}
By \eqref{(0.5,1,k)-LL-results}--\eqref{(0.5,1,k)-LG-results}, one has
$d_{\1,k+\frac{1}{2}}=\ad(\frac{1}{k+\frac{1}{2}}c_{L_{0},k
+\frac{1}{2}}G_{k+\frac{1}{2}}+(\frac{\lambda}
{{(k+\frac{1}{2})}^{2}}c_{L_{0},k+\frac{1}{2}}+
\frac{d_{L_{0},k+\frac{1}{2}}}{k+\frac{1}{2}})H_{k+\frac{1}{2}})$,
which implies $\Der_{\1}({\mathfrak{L}^{\frac{1}{2}}_\lambda})
\cap\Der_{k+\frac{1}{2}}({\mathfrak{L}^{\frac{1}{2}}_\lambda})
=\Inn_{\1}({\mathfrak{L}^{\frac{1}{2}}_\lambda})
\cap\Inn_{k+\frac{1}{2}}({\mathfrak{L}^{\frac{1}{2}}_\lambda})$.
\end{proof}

For any $d_{\1,0}\in\Der_{\1}({\mathfrak{L}^{\frac{1}{2}}_\lambda})
\cap\Der_{0}({\mathfrak{L}^{\frac{1}{2}}_\lambda})$, we have $d_{\1,0}=0$, which implies the following lemma.

\begin{lemm}\label{lemma(0.5,1,0)}
We have $\Der_{\1}({\mathfrak{L}^{\frac{1}{2}}_\lambda})
\cap\Der_{0}({\mathfrak{L}^{\frac{1}{2}}_\lambda})
=\Inn_{\1}({\mathfrak{L}^{\frac{1}{2}}_\lambda})
\cap\Inn_{0}({\mathfrak{L}^{\frac{1}{2}}_\lambda})=\{0\}$.
\end{lemm}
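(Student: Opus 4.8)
The plan is to exploit the fact that for $s=\frac{1}{2}$ the $\Z_2$-grading of $\mathfrak{L}^{\frac{1}{2}}_\lambda$ is rigidly tied to the $\frac{1}{2}\Z$-grading recalled in the introduction: the even part $({\mathfrak{L}^{\frac{1}{2}}_\lambda})_{\0}=\bigoplus_{k\in\Z}(\C L_k\oplus\C I_k)$ is concentrated in integer degrees, while the odd part $({\mathfrak{L}^{\frac{1}{2}}_\lambda})_{\1}=\bigoplus_{k\in\Z}(\C G_{k+\frac{1}{2}}\oplus\C H_{k+\frac{1}{2}})$ is concentrated in half-integer degrees. Hence every homogeneous component $({\mathfrak{L}^{\frac{1}{2}}_\lambda})_{p}$, $p\in\frac{1}{2}\Z$, lies entirely inside one of the two $\Z_2$-homogeneous subspaces, and a graded piece of integer degree meets a graded piece of odd parity only in $\{0\}$ (and symmetrically).

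First I would fix an arbitrary $d_{\1,0}\in\Der_{\1}({\mathfrak{L}^{\frac{1}{2}}_\lambda})\cap\Der_{0}({\mathfrak{L}^{\frac{1}{2}}_\lambda})$. By definition it simultaneously preserves degree, $d_{\1,0}(({\mathfrak{L}^{\frac{1}{2}}_\lambda})_{p})\subseteq({\mathfrak{L}^{\frac{1}{2}}_\lambda})_{p}$ for all $p\in\frac{1}{2}\Z$, and reverses parity, $d_{\1,0}(({\mathfrak{L}^{\frac{1}{2}}_\lambda})_{\overline{i}})\subseteq({\mathfrak{L}^{\frac{1}{2}}_\lambda})_{\overline{i}+\overline{1}}$ for $i\in\{0,1\}$. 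Applying both constraints to the basis vectors: for $x\in\{L_n,I_n\}$ one gets $d_{\1,0}(x)\in({\mathfrak{L}^{\frac{1}{2}}_\lambda})_{n}\cap({\mathfrak{L}^{\frac{1}{2}}_\lambda})_{\1}=\{0\}$, and for $x\in\{G_{n+\frac{1}{2}},H_{n+\frac{1}{2}}\}$ one gets $d_{\1,0}(x)\in({\mathfrak{L}^{\frac{1}{2}}_\lambda})_{n+\frac{1}{2}}\cap({\mathfrak{L}^{\frac{1}{2}}_\lambda})_{\0}=\{0\}$. Since these vectors span $\mathfrak{L}^{\frac{1}{2}}_\lambda$, this forces $d_{\1,0}=0$, i.e. $\Der_{\1}({\mathfrak{L}^{\frac{1}{2}}_\lambda})\cap\Der_{0}({\mathfrak{L}^{\frac{1}{2}}_\lambda})=\{0\}$.

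For the remaining equality I would run the same parity/degree bookkeeping on an inner derivation: $\ad x\in\Inn_{\1}({\mathfrak{L}^{\frac{1}{2}}_\lambda})\cap\Inn_{0}({\mathfrak{L}^{\frac{1}{2}}_\lambda})$ forces $x\in({\mathfrak{L}^{\frac{1}{2}}_\lambda})_{0}\cap({\mathfrak{L}^{\frac{1}{2}}_\lambda})_{\1}=\{0\}$, so $\Inn_{\1}({\mathfrak{L}^{\frac{1}{2}}_\lambda})\cap\Inn_{0}({\mathfrak{L}^{\frac{1}{2}}_\lambda})=\{0\}$ as well; since $\Inn(\cdot)\subseteq\Der(\cdot)$ in general, the chain of equalities asserted in the lemma follows. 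There is essentially no obstacle: the whole content is the incompatibility of \emph{odd} with \emph{degree $0$} for this algebra when $s=\frac{1}{2}$, and no bracket relations or derivation identities are needed. The only point worth stating carefully is that each $\frac{1}{2}\Z$-graded component is $\Z_2$-homogeneous, which is immediate from the explicit basis of $\mathfrak{L}^{\frac{1}{2}}_\lambda$.
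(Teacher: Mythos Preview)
Your argument is correct and is precisely the observation the paper is using: in the text just before the lemma the authors simply assert that any $d_{\1,0}\in\Der_{\1}({\mathfrak{L}^{\frac{1}{2}}_\lambda})\cap\Der_{0}({\mathfrak{L}^{\frac{1}{2}}_\lambda})$ vanishes, without spelling out the reason, and your parity/degree incompatibility is exactly that reason. One small remark: for the inner part you do not need to argue about the element $x$ at all; the inclusion $\Inn_{\1}\cap\Inn_{0}\subseteq\Der_{\1}\cap\Der_{0}=\{0\}$ (which you also mention) already suffices and avoids any ambiguity about how $\Inn_r$ is defined.
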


\section{Automorphism groups}

\indent\ \ \ \ \ \
Denote by $\Aut({\L})$ the automorphism group of ${\L}$ and $\mathfrak{I}$ the subgroup generated by $\{\exp(\alpha\ad I_{k})| \alpha\in\C, k\in\Z\}$. Denote by $\sigma$ the linear map from ${\L}$ to ${\L}$ satisfying
\begin{eqnarray}\label{aut-sigma}
\begin{array}{llllll}
&&\sigma(L_{k})=\epsilon{\alpha}^{k}L_{\epsilon k}+k{\alpha}^{k}\beta_{\epsilon}I_{\epsilon k},\vs{6pt}\\
&&\sigma(I_{k})={\alpha}^{k}\mu I_{\epsilon k},\ \ \ \
\sigma(H_{k+s})={\alpha}^{k}xH_{\epsilon(k+s)},\vs{6pt}\\
&&\sigma(G_{k+s})
=\frac{x}{\mu\epsilon}{\alpha}^{k}G_{\epsilon(k+s)}
+({\alpha}^{k}\gamma+\frac{2k(k-2s){\beta}_{\epsilon}{\alpha}^{k}x}{\mu})
H_{\epsilon(k+s)},
\end{array}
\end{eqnarray}
where $\epsilon=1$ or $-1$, $k\in\Z$, $\mu, \alpha, {\beta}_{\epsilon}, \gamma\in\C$, $\alpha\mu\neq0$ and $x^2={\alpha}^{2s}{\mu}^{3}$. Denote by $\mathfrak{G}$ the set generated (via composition of linear maps) by all such $\sigma$.

\begin{theo}\label{theorem-automorphism}

(1) $\mathfrak{I}$ is an abelian normal subgroup of $\Aut({\L})$.

(2) If $\lambda=0$, then $\mathfrak{G}$ is a subgroup of $\Aut({\L})$, and $\Aut({\L})=\mathfrak{I}\rtimes\mathfrak{G}$.

(3) If $\lambda\neq0$, $\mu=\epsilon=1$, then $\mathfrak{G}$ is a subgroup of $\Aut({\L})$, and $\Aut({\L})=\mathfrak{I}\rtimes\mathfrak{G}$.

(4) If $\lambda\neq0$, $\epsilon=-1$, then $\Aut({\L})=\mathfrak{I}$.

\end{theo}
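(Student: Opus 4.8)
The plan is to prove (1) by recognizing the $\ad I_k$ as square-zero derivations, and (2)--(4) by first checking that the maps $\sigma$ of \eqref{aut-sigma} are automorphisms and then showing every automorphism factors as an element of $\mathfrak{I}$ times an element of $\mathfrak{G}$, the second factor being controlled by a reduction to the Witt algebra.

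For part (1): every bracket involving an $I_k$ lands in the abelian ideal $\mathcal{A}:=\Sp\{I_m,H_p\mid m,p\in\Z_s\}$, and $\ad I_k$ kills $\mathcal{A}$, so $(\ad I_k)^2=0$ and hence $\exp(\alpha\,\ad I_k)=\mathrm{id}+\alpha\,\ad I_k\in\Aut(\mathfrak{L})$. Since $[\ad I_k,\ad I_l]=\ad[I_k,I_l]=0$ and in fact $(\ad I_k)(\ad I_l)=0$, the group $\mathfrak{I}$ is abelian and each of its elements has the shape $\mathrm{id}+\sum_k c_k\,\ad I_k$. For normality, $\Sp\{I_m\mid m\in\Z_s\}=[\mathfrak{L}_{\1},\mathfrak{L}_{\1}]$ is characteristic, so $\varphi(I_k)\in\Sp\{I_m\}$ for every $\varphi\in\Aut(\mathfrak{L})$, whence $\varphi\exp(\alpha\,\ad I_k)\varphi^{-1}=\exp(\alpha\,\ad\varphi(I_k))\in\mathfrak{I}$.

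For (2)--(4), the first task is a direct substitution: each $\sigma$ in \eqref{aut-sigma} is bijective because $\alpha\mu\neq0$ and $x^2=\alpha^{2s}\mu^3\neq0$, and plugging \eqref{aut-sigma} into each of the six bracket relations verifies $\sigma([x,y])=[\sigma x,\sigma y]$. In this check the identity $[G_p,G_q]=I_{p+q}$ is exactly what forces $x^2=\alpha^{2s}\mu^3$, and when $\lambda\neq0$ the extra summand $\lambda(m+1)H_{m+p}$ in $[L_m,G_p]$ fails to be preserved unless $\epsilon=\mu=1$; this is why only that subfamily of $\sigma$'s survives in case $\lambda\neq0$ (giving (3)) and why the $\epsilon=-1$ branch is empty and contributes nothing beyond $\mathfrak{I}$ (giving (4)). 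Being generated by automorphisms, $\mathfrak{G}$ is a subgroup. The substantial direction is surjectivity, $\Aut(\mathfrak{L})=\mathfrak{I}\cdot\mathfrak{G}$. Given $\varphi$, it preserves $\mathfrak{L}_{\0}=W(2,2)$, $\mathfrak{L}_{\1}$, the maximal abelian ideal $\mathcal{A}$, and its characteristic submodules $\Sp\{I_m\}=[\mathfrak{L}_{\1},\mathfrak{L}_{\1}]$ and $\Sp\{H_p\}=\mathcal{A}\cap\mathfrak{L}_{\1}$. On the quotient $W(2,2)/\Sp\{I_m\}$, which is the Witt algebra, $\varphi$ induces $\bar L_m\mapsto\epsilon\alpha^m\bar L_{\epsilon m}$ with $\epsilon=\pm1$, $\alpha\in\C^*$, by the classical description of $\Aut$ of the Witt algebra. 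Lifting, write $\varphi(L_m)=\epsilon\alpha^m L_{\epsilon m}+\eta_m I_{\epsilon m}$ and feed $[L_m,L_n]=(m-n)L_{m+n}$ through $\varphi$ to obtain a cocycle recursion for $\eta_m$; $W(2,2)$-equivariance then forces $\varphi(I_m)=\alpha^m\mu I_{\epsilon m}$ and $\varphi(H_p)=\alpha^{p-s}xH_{\epsilon p}$ for constants $\mu\neq0$, $x$; writing $\varphi(G_p)=y_pG_{\epsilon p}+z_pH_{\epsilon p}$, the relation $[G_p,G_q]=I_{p+q}$ fixes $y_p$ and recovers $x^2=\alpha^{2s}\mu^3$, and $[L_m,G_p]=(\tfrac m2-p)G_{m+p}+\lambda(m+1)H_{m+p}$ pins down $z_p$ up to one free constant $\gamma$ (and, when $\lambda\neq0$, forces $\epsilon=\mu=1$). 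Matching against \eqref{aut-sigma}, $\varphi$ equals such a $\sigma$ composed with the one residual $\Sp\{I_m\}$-valued (coboundary) part of $\eta_m$, and that residual part is an element of $\mathfrak{I}$; hence $\varphi\in\mathfrak{I}\cdot\mathfrak{G}$.

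To finish, $\mathfrak{I}\cap\mathfrak{G}=\{\mathrm{id}\}$: an element of the intersection fixes every $I_m$ and $H_p$, forcing $\alpha=\mu=x=1$ and $\epsilon=1$ in \eqref{aut-sigma}, and then comparing the two descriptions of the action on the $G_p$ forces the remaining parameters to vanish; combined with $\mathfrak{I}\lhd\Aut(\mathfrak{L})$ from (1) this yields $\Aut(\mathfrak{L})=\mathfrak{I}\rtimes\mathfrak{G}$. The main obstacle is the lifting step: solving the cocycle equation for $\eta_m$ and the recursion for $z_p$ coming from $[L_m,G_p]$, correctly separating the $\mathfrak{I}$-realizable (coboundary) degrees of freedom from the genuinely $\mathfrak{G}$ ones, and extracting from the $\lambda(m+1)$-term the rigidity that rules out $\epsilon=-1$ and $\mu\neq1$ whenever $\lambda\neq0$. (One may alternatively note that the continuous parameters $\alpha,\mu,x,\gamma$ correspond to one-parameter subgroups $\exp(t\,d)$ with $d$ among $\ad L_0$, $\partial_2$, $\partial_4$, and, for $\lambda=0$, $\partial_1$, $\partial_3$ from Theorem \ref{thm-derivation algebra}, which gives a quick sanity check on the list of free parameters.)
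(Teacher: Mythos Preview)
Your proposal is correct and follows essentially the same architecture as the paper: prove (1) via the conjugation formula $\varphi\exp(\alpha\,\ad I_k)\varphi^{-1}=\exp(\alpha\,\ad\varphi(I_k))$ together with the invariance of $\Sp\{I_m\}$, and prove (2)--(4) by pinning down any automorphism after an $\mathfrak{I}$-adjustment. The one substantive difference is in how the restriction to the even part is handled. The paper invokes \cite[Theorem~5.2]{GJP} on $\Aut(W(a,b))$ with $a=0$, $b=-1$ to obtain directly (after composing with some $\tau\in\mathfrak{I}$) the form $\sigma(L_m)=\epsilon\alpha^m L_{\epsilon m}+m\alpha^m\beta_\epsilon I_{\epsilon m}$, $\sigma(I_m)=\alpha^m\mu I_{\epsilon m}$; it then uses a top-level/bottom-level argument to force $\sigma(H_p)$ and the $G$-component of $\sigma(G_p)$ to be single terms indexed by $\epsilon p$. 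You instead pass to the Witt quotient $W(2,2)/\Sp\{I_m\}$, read off $(\epsilon,\alpha)$ there, and recover the $I$-part of $\varphi(L_m)$ as a cocycle. This is equally valid and more self-contained, but when you execute it you must not presuppose that $\varphi(L_m)$, $\varphi(I_m)$, $\varphi(H_p)$, $\varphi(G_p)$ already have a single term in each slot; that is exactly what the paper's level argument (or, equivalently, an $\ad\varphi(L_0)$-eigenspace argument after normalizing $\varphi(L_0)$ via $\mathfrak{I}$) supplies, and it should be made explicit in your write-up. With that caveat, the two proofs coincide in content and in the way the constraint $\lambda\neq0\Rightarrow\mu=\epsilon=1$ is extracted from the $\lambda(m+1)$-term in $[L_m,G_p]$.
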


Before proving Theorem \ref{theorem-automorphism}, we first give several lemmas.

\begin{lemm}\label{aut-lemma-001}
Denote by $\textbf{I}$ the vector space spanned by $\{I_{k}|k\in\Z\}$ over $\C$, we have $\sigma(I_{k})\in\textbf{I}$.
\end{lemm}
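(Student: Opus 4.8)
The plan is to realize $\textbf{I}$ intrinsically, so that its $\sigma$-invariance is automatic. Reading off the super bracket table of $\L$, among the odd--odd brackets only $[G_p,G_q]=I_{p+q}$ is non-vanishing, while $[G_p,H_q]=[H_p,H_q]=0$. Hence
\[
[\L_{\1},\L_{\1}]=\sp_{\C}\{\,I_{p+q}\mid p,q\in s+\Z\,\}.
\]
Since $\{\,p+q\mid p,q\in s+\Z\,\}=\Z$ for both $s=0$ and $s=\frac12$, and $\{I_m\}$ is indexed by $\Z$, this says precisely $[\L_{\1},\L_{\1}]=\textbf{I}$. This is the one computation in the argument that deserves a second glance: one must check that the odd--odd brackets exhaust all of $\textbf{I}$ and not merely a proper subspace.

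Granting this, the lemma is immediate. By \eqref{eq-automorphism} any $\sigma\in\Aut(\L)$ is bijective with $\sigma(\L_{\1})\subseteq\L_{\1}$ and $\sigma(\L_{\0})\subseteq\L_{\0}$, so bijectivity forces $\sigma(\L_{\1})=\L_{\1}$. Applying $\sigma$ (a homomorphism for the super bracket) to the identity $\textbf{I}=[\L_{\1},\L_{\1}]$ gives
\[
\sigma(\textbf{I})=\sigma\bigl([\L_{\1},\L_{\1}]\bigr)=\bigl[\sigma(\L_{\1}),\sigma(\L_{\1})\bigr]=[\L_{\1},\L_{\1}]=\textbf{I},
\]
and in particular $\sigma(I_k)\in\textbf{I}$ for every $k\in\Z$. (If instead $\sigma$ refers to the specific linear map defined in \eqref{aut-sigma}, the conclusion is read off directly: the second line there gives $\sigma(I_k)={\alpha}^{k}\mu I_{\epsilon k}\in\textbf{I}$.)

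So there is no real obstacle; the content is the structural observation $\textbf{I}=[\L_{\1},\L_{\1}]$, everything else being the formal fact that automorphisms preserve the $\Z_2$-grading and the bracket. I would present the lemma exactly in this way, since it is the base case of the bootstrap used to pin down $\Aut(\L)$: once $\sigma$ is known to stabilize $\textbf{I}$, the subsequent lemmas can analyze the action it induces on $\textbf{I}$ and on $\L/\textbf{I}$, eventually yielding the decomposition $\Aut(\L)=\mathfrak{I}\rtimes\mathfrak{G}$ in Theorem \ref{theorem-automorphism}.
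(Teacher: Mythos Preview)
Your proof is correct and is precisely the argument the paper has in mind: the paper's one-line proof simply cites $[G_p,G_q]=I_{p+q}$ and \eqref{eq-automorphism}, which unpacks to exactly your observation that $\textbf{I}=[\L_{\1},\L_{\1}]$ and hence is preserved by any grading-preserving automorphism. Your version is just the paper's sketch made explicit, with the added check that the odd--odd brackets surject onto all of $\textbf{I}$.
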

\begin{proof}
It is trivial according to $[G_{p},G_{q}]=I_{p+q}$ and \eqref{eq-automorphism}.
\end{proof}

\begin{lemm}\label{aut-lemma-002}
Replacing $\sigma$ with $\sigma\tau$ for some suitable $\tau\in\mathfrak{I}$ if necessary, and still denote as $\sigma$, one can assume
\begin{eqnarray}
&&\sigma(L_{m})=\epsilon {\alpha}^{m}L_{\epsilon m}+m{\alpha}^{m}\beta_{\epsilon}I_{\epsilon m},\ \ \ \
\sigma(I_{m})={\alpha}^{m}\mu I_{\epsilon m},
\end{eqnarray}
where $\alpha, \mu\in\C$, $\alpha\neq0$, $\mu\neq0$, $\epsilon=1$ or $-1$, $\beta_{\epsilon}\in\C$.
\end{lemm}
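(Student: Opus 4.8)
The plan is to take $\sigma$ to be an arbitrary automorphism of $\L$ and push the analysis through the even part. Since $\sigma$ preserves the $\Z_2$-grading it restricts to an automorphism of $(\L)_{\0}=W(2,2)$, and by Lemma \ref{aut-lemma-001} applied to $\sigma$ and to $\sigma^{-1}$ it stabilizes the abelian ideal $\textbf{I}$, with $\sigma(\textbf{I})=\textbf{I}$. Hence $\sigma$ induces an automorphism $\bar\sigma$ of $W(2,2)/\textbf{I}$, which is the Witt algebra (the centerless Virasoro algebra) with basis $\bar L_m$. By the classical determination of the automorphism group of the Witt algebra, $\bar\sigma(\bar L_m)=\epsilon\alpha^m\bar L_{\epsilon m}$ for some $\epsilon\in\{1,-1\}$ and $\alpha\in\C^{*}$; lifting, $\sigma(L_m)=\epsilon\alpha^m L_{\epsilon m}+u_m$ with $u_m\in\textbf{I}$.

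I would next compute $\sigma|_{\textbf{I}}$. Writing $\sigma(I_m)=\sum_j a_{m,j}I_j$ and applying $\sigma$ to $[L_0,I_n]=-nI_n$ gives $(n-\epsilon j)a_{n,j}=0$, so $a_{n,j}=0$ unless $j=\epsilon n$; applying $\sigma$ to $[L_m,I_0]=mI_m$ then gives $a_{m,\epsilon m}=\alpha^m a_{0,0}$. With $\mu:=a_{0,0}\neq0$ this is precisely $\sigma(I_m)=\alpha^m\mu I_{\epsilon m}$, the second displayed identity.

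It remains to normalize $u_m$. Applying $\sigma$ to $[L_m,L_n]=(m-n)L_{m+n}$ and comparing $\textbf{I}$-components yields
\begin{eqnarray*}
(m-n)u_{m+n}=\epsilon\alpha^m[L_{\epsilon m},u_n]-\epsilon\alpha^n[L_{\epsilon n},u_m].
\end{eqnarray*}
Substituting $u_m=\alpha^m\hat u_m$ turns this, after the reindexing $m\mapsto-m$ when $\epsilon=-1$, into the $1$-cocycle condition $(m-n)\hat u_{m+n}=[L_m,\hat u_n]-[L_n,\hat u_m]$, i.e. $L_m\mapsto\hat u_m$ is a derivation of the Witt algebra into $\textbf{I}$. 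As a Witt-module $\textbf{I}$ is isomorphic to the adjoint module (via $I_k\leftrightarrow L_k$), so such a derivation is inner: $\hat u_m=\sum_{l\in S}e_l(l-m)I_{l+m}$ for a finite set $S$ and constants $e_l$ (one may also skip the cohomological input and solve the recursion directly, specializing at $m=0$, $n=1$ and inducting). Composing $\sigma$ on the right with the element $\tau=\prod_{l\in S,\,l\neq0}\exp(t_l\ad I_l)$ of $\mathfrak{I}$ leaves $\sigma(I_m)$ untouched and adds $\sum_{l}t_l\mu(l-m)\alpha^{l+m}I_{\epsilon(l+m)}$ to $u_m$; choosing the $t_l$ so as to cancel all the contributions with $l\neq0$ leaves exactly $u_m=m\alpha^m\beta_{\epsilon}I_{\epsilon m}$ for a scalar $\beta_\epsilon$ coming from the $l=0$ term, which is the first displayed identity.

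The real content is the last step: identifying the solutions of the functional equation for $u_m$, i.e. the fact that the Witt algebra has no outer derivations with adjoint coefficients. The rest is routine --- the grading/ideal bookkeeping is immediate from Lemma \ref{aut-lemma-001}, and each use of a bracket relation is a one-line computation. A small but genuine point is that $\tau$ must be a finite product of $\exp(\ad I_l)$'s in order to lie in $\mathfrak{I}$ and to act on each $L_m$ through a finite sum; this is automatic here because an inner cocycle involves only finitely many $I_l$.
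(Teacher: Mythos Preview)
Your argument is correct and self-contained, whereas the paper's proof is a one-line citation: it observes that the restriction of $\sigma$ to the even part ${(\L)}_{\0}=W(2,2)$ is an automorphism of the Lie algebra $W(a,b)$ with $a=0$, $b=-1$ in the notation of \cite{GJP}, and then invokes Theorem~5.2 of that reference for the stated normal form. You instead work intrinsically: pass to the quotient Witt algebra $W(2,2)/\textbf{I}$ and use the classical description of $\Aut(\mathrm{Witt})$ to pin down the $L$-part, compute $\sigma|_{\textbf{I}}$ directly from the relations $[L_0,I_n]$ and $[L_m,I_0]$, and then recognise the residual $\textbf{I}$-component of $\sigma(L_m)$ as a $1$-cocycle of the Witt algebra with values in the adjoint module, hence inner, which you kill by a finite product $\tau\in\mathfrak{I}$. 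The trade-off is clear: the paper's route is shorter but outsources the content to \cite{GJP}; yours is longer but makes the mechanism (and in particular the precise role of $\mathfrak{I}$) explicit. One incidental observation: since $H^1(\mathrm{Witt},\mathrm{adj})=0$, your normalisation in fact allows the $l=0$ term to be cancelled as well (take $t_0=e_0/\mu$), so you can achieve $\beta_\epsilon=0$; this is slightly sharper than what the lemma asserts, and of course implies it.
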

\begin{proof}
The restriction of $\sigma$ on ${(\L)}_{\0}$ is an element in $\Aut(W(a,b))$ for $a=0$, $b=-1$ in \cite{GJP}. This lemma follows immediately from Theorem $5.2$ of \cite{GJP}.
\end{proof}

\begin{lemm}\label{aut-lemma-003}
Denote by $\textbf{H}$ the vector space spanned by $\{H_{p}|p\in{\Z_s}\}$. Then $\sigma(H_{p})\in\textbf{H}$.
\end{lemm}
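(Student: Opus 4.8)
The plan is to show that $\sigma$ maps the span $\mathbf{H}$ of all $H_p$ into itself by using the already-normalized form of $\sigma$ on the even part from Lemma \ref{aut-lemma-002} together with the bracket relations that detect $H$. First I would recall that $\mathbf{H}$ is precisely the space of odd elements $u$ with $[x,u]\in\mathbf{I}$ for every $x\in(\L)_{\0}$; indeed, from the super brackets, $[L_m,H_p]=(\frac m2-p)H_{m+p}$ lands in $\mathbf{H}$ and $[I_m,H_p]=0$, whereas $[L_m,G_p]$ and $[I_m,G_p]$ have nonzero $G$- or $H$-components in a way that, combined over varying $m$, forces any odd element whose image under all $\ad L_m$ and $\ad I_m$ avoids the $G$-direction to lie in $\mathbf{H}$. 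So the characterization I want is: $u\in(\L)_{\1}$ lies in $\mathbf{H}$ if and only if $[I_m,u]=0$ for all $m\in\Z$ (note $[I_m,H_p]=0$ but $[I_m,G_p]=(m-2p)H_{m+p}\neq0$ for generic $m$).

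Next I would take an arbitrary basis vector $H_p$ and write $\sigma(H_p)=\sum_q c_q G_q + \sum_q d_q H_q$ with finitely many nonzero terms (here $q$ runs over $s+\Z$). Applying $\sigma$ to the identity $[I_m,H_p]=0$ and using $\sigma(I_m)=\alpha^m\mu I_{\epsilon m}$ from Lemma \ref{aut-lemma-002}, we get $0=\sigma([I_m,H_p])=[\alpha^m\mu I_{\epsilon m},\sigma(H_p)]=\alpha^m\mu\sum_q c_q\,[I_{\epsilon m},G_q]=\alpha^m\mu\sum_q c_q(\epsilon m-2q)H_{\epsilon m+q}$. Since $\alpha\mu\neq0$, for each fixed $m$ the vectors $H_{\epsilon m+q}$ with distinct $q$ are linearly independent, so $c_q(\epsilon m-2q)=0$ for all $m\in\Z$ and all $q$. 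For any fixed $q$ there is some integer $m$ with $\epsilon m\neq 2q$, hence $c_q=0$ for every $q$. Therefore $\sigma(H_p)=\sum_q d_q H_q\in\mathbf{H}$, as desired.

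The main obstacle here is really just making sure the linear-independence bookkeeping is airtight: one must check that for a \emph{fixed} finite sum the "bad" index $q$ with $\epsilon m=2q$ can always be dodged by choosing another integer $m$ (which is immediate since $q$ is fixed and $m$ ranges over all of $\Z$), and that $\sigma$ being a bijective algebra map lets us push the relation $[I_m,H_p]=0$ through cleanly. A secondary point is that one should confirm $\sigma(I_m)$ has the stated scalar form before invoking it — but that is exactly the content of Lemma \ref{aut-lemma-002}, which we are entitled to use. No delicate estimate or deep structural input is needed; the argument is a short computation once the characterization of $\mathbf{H}$ via $\ad I_m$ is isolated.
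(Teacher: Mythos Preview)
Your argument is correct: from $[I_m,H_p]=0$ and the normalized form $\sigma(I_m)=\alpha^m\mu I_{\epsilon m}$ you deduce that the $G$-part of $\sigma(H_p)$ vanishes, exactly as you wrote. (One small slip: in your first paragraph you say $\mathbf{H}$ is the set of odd $u$ with $[x,u]\in\mathbf{I}$ for all even $x$, but $[L_m,H_p]\in\mathbf{H}$, not $\mathbf{I}$; you then correct yourself to the right characterization via $[I_m,u]=0$, which is what you actually use.)

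The paper reaches the same conclusion by a dual move: instead of applying $\sigma$ to the \emph{vanishing} relation $[I_m,H_p]=0$, it applies $\sigma$ to the \emph{surjective} relation $[I_n,G_p]=(n-2p)H_{n+p}$. Since $\sigma(I_n)\in\mathbf{I}$ (Lemma~\ref{aut-lemma-001}) and $\sigma(G_p)$ is odd, the left side lies in $[\mathbf{I},(\L)_{\1}]\subset\mathbf{H}$, so the right side $(n-2p)\sigma(H_{n+p})$ is in $\mathbf{H}$; varying $n,p$ covers every index. The difference is that the paper needs only Lemma~\ref{aut-lemma-001} (that $\sigma(\mathbf{I})\subset\mathbf{I}$), whereas your computation, as written, invokes the sharper normalization of Lemma~\ref{aut-lemma-002}. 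Your route has the pedagogical advantage of exhibiting $\mathbf{H}$ as the $\ad\mathbf{I}$-annihilator in the odd part, which makes the invariance under $\sigma$ conceptually transparent; the paper's route is marginally more economical in its hypotheses.
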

\begin{proof}
Considering the actions of $\sigma$ both sides of $[I_{n},G_{p}]=(n-2p)H_{n+p}$ and using Lemma \ref{aut-lemma-001}, we obtain this lemma.
\end{proof}

Now we can assume $\sigma(H_{p})=\sum_{q\in{\Z_s}}d_{H_{p},H_{q}}H_{q}$ and $\sigma(G_{p})=\sum_{q\in{\Z_s}}(c_{G_{p},G_{q}}G_{q}+d_{G_{p},H_{q}}H_{q})$. We give the definition of {\it level} of elements. Take $x=\sum_{p=b}^{t}a_{p}H_{p}\in\textbf{H}, (a_{t}\,a_{b}\neq0)$ for example: we say $t$ is the {\it top level} of $x$ and $b$ is the {\it bottom level} of $x$. By considering the top level and bottom level of $[\sigma(L_{m}),\sigma(H_{p})]=(\frac{m}{2}-p)\sigma(H_{m+p})$ and $[\sigma(L_n),\sigma(G_p)]=(\frac{n}{2}-p)\sigma(G_{p+n})+\lambda(n+1)\sigma(H_{n+p})$, together with Lemmas \ref{aut-lemma-002} and \ref{aut-lemma-003}, we immediately have the following two lemmas.
\begin{lemm}\label{aut-lemma-004}
$\sigma(H_{p})=d_{H_{p},H_{\epsilon p}}H_{\epsilon p}$.
\end{lemm}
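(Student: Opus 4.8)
The plan is to pin down $\sigma(H_p)$ using the grading operator $L_0$. After the normalization of Lemma \ref{aut-lemma-002} we have $\sigma(L_0)=\epsilon L_0$ (take $m=0$ in that lemma), and by Lemma \ref{aut-lemma-003} we may write $\sigma(H_p)=\sum_{q\in{\Z_s}}d_{H_p,H_q}H_q\in\textbf{H}$. Applying $\sigma$ to $[L_0,H_p]=-pH_p$ and using $\sigma(L_0)=\epsilon L_0$ gives $\epsilon[L_0,\sigma(H_p)]=-p\,\sigma(H_p)$, hence $[L_0,\sigma(H_p)]=-\epsilon p\,\sigma(H_p)$ since $\epsilon^2=1$. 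So $\sigma(H_p)$ is an eigenvector of $\ad L_0$.

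Next I would observe that $\ad L_0$ acts on $\textbf{H}$ by $\ad L_0(H_q)=-qH_q$, with pairwise distinct eigenvalues $-q$ $(q\in{\Z_s})$; therefore its only eigenvectors are the nonzero scalar multiples of single basis vectors $H_q$. Comparing the coefficient of $H_q$ on the two sides of $[L_0,\sigma(H_p)]=-\epsilon p\,\sigma(H_p)$ gives $(\epsilon p-q)d_{H_p,H_q}=0$ for all $q$, so $d_{H_p,H_q}=0$ whenever $q\neq\epsilon p$. This yields $\sigma(H_p)=d_{H_p,H_{\epsilon p}}H_{\epsilon p}$, as claimed. There is no real obstacle; the only point worth checking is that the normalization in Lemma \ref{aut-lemma-002} does not alter $\sigma(H_p)$, which is clear because $\mathfrak{I}$ is generated by the $\exp(\alpha\,\ad I_k)$ and $[I_k,H_p]=0$, so every element of $\mathfrak{I}$ fixes $H_p$.

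For completeness I note the route suggested by the discussion preceding the lemma: expand $\sigma(H_p)=\sum_q d_{H_p,H_q}H_q$, apply $\sigma$ to $[L_m,H_p]=(\frac m2-p)H_{m+p}$, and use $[I_{\epsilon m},H_q]=0$ and $[L_{\epsilon m},H_q]=(\frac{\epsilon m}{2}-q)H_{\epsilon m+q}$ to show that for all but finitely many $m$ the top (resp.\ bottom) level of $\sigma(H_{m+p})$ equals $\epsilon m$ plus the top (resp.\ bottom) level of $\sigma(H_p)$. This forces the top and bottom levels of $\sigma(H_n)$ to be $\epsilon n+c$ and $\epsilon n+c'$ for global constants $c\ge c'$. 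The delicate part of this alternative is forcing $c=c'=0$: applying the same reasoning to $\sigma^{-1}$ (legitimate, since Lemmas \ref{aut-lemma-002} and \ref{aut-lemma-003} apply to it) and composing gives $c\le c'$, hence $c=c'$; and the explicit recursion relating $d_{H_{m+p},H_{\epsilon(m+p)+c}}$ to $d_{H_p,H_{\epsilon p+c}}$ is consistent across different base points only when $\epsilon c=0$, i.e.\ $c=0$. I would present the $L_0$-eigenvector argument, which sidesteps all of this.
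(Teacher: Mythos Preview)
Your argument is correct and cleaner than the paper's. The paper proceeds by tracking the top and bottom levels of $[\sigma(L_m),\sigma(H_p)]=(\tfrac m2-p)\sigma(H_{m+p})$ for general $m$, using Lemma~\ref{aut-lemma-002} together with $[I_{\epsilon m},H_q]=0$; the details are left implicit there, and as you note they require a little care to pin down the exact level shift. Your approach simply takes $m=0$, giving $\sigma(L_0)=\epsilon L_0$ and hence $[L_0,\sigma(H_p)]=-\epsilon p\,\sigma(H_p)$; since $\mathrm{ad}\,L_0$ is diagonal on $\textbf{H}$ with distinct eigenvalues, the result follows at once by comparing coefficients. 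This buys you a shorter and more transparent proof, and avoids the level-tracking entirely. Your remark that the normalization by $\tau\in\mathfrak{I}$ leaves $H_p$ fixed (because $[I_k,H_p]=0$) is correct, though strictly speaking the lemma is already stated for the normalized $\sigma$, so it is not needed here.
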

\begin{lemm}\label{aut-lemma-005}
$\sigma(G_{p})=c_{G_{p},G_{\epsilon p}}G_{\epsilon p}+\sum_{q}d_{G_{p},H_{q}}H_{q}$.
\end{lemm}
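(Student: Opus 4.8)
\noindent\emph{Proof proposal.} Write $\textbf{G}$ for the subspace of $\L$ spanned by all $G_q$ ($q\in\Z_s$), so that the odd part of $\L$ is $\textbf{G}\oplus\textbf{H}$ and a priori $\sigma(G_p)=\sum_q c_{G_p,G_q}G_q+\sum_q d_{G_p,H_q}H_q$ with only finitely many nonzero coefficients; the assertion is precisely that the first sum collapses to the single index $q=\epsilon p$. The plan is to push $\sigma$ through the bracket relation
\begin{eqnarray*}
[L_n,G_p]=(\tfrac n2-p)G_{n+p}+\lambda(n+1)H_{n+p}
\end{eqnarray*}
and to compare the $\textbf{G}$-components of the two sides, using the shapes of $\sigma(L_n)$ and $\sigma(H_p)$ already fixed by Lemmas \ref{aut-lemma-002} and \ref{aut-lemma-004}.

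First I would expand $[\sigma(L_n),\sigma(G_p)]$ by substituting $\sigma(L_n)=\epsilon\alpha^n L_{\epsilon n}+n\alpha^n\beta_\epsilon I_{\epsilon n}$ and using the brackets $[L_{\epsilon n},G_q]$, $[L_{\epsilon n},H_q]$, $[I_{\epsilon n},G_q]$, $[I_{\epsilon n},H_q]$. The key observation is that the $\textbf{G}$-component of the result comes solely from the $L_{\epsilon n}$-part acting on the $G_q$'s, namely $\epsilon\alpha^n\sum_q(\tfrac{\epsilon n}2-q)c_{G_p,G_q}G_{q+\epsilon n}$: the deformation summand $\lambda(\epsilon n+1)H_{q+\epsilon n}$ inside $[L_{\epsilon n},G_q]$, the whole $[I_{\epsilon n},\,\cdot\,]$ contribution, and $[L_{\epsilon n},H_q]$ all land in $\textbf{H}$. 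On the other side, $\sigma$ of the right member has $\textbf{G}$-component $(\tfrac n2-p)\sum_q c_{G_{n+p},G_q}G_q$, because by Lemma \ref{aut-lemma-004} the term $\lambda(n+1)\sigma(H_{n+p})$ lies purely in $\textbf{H}$. Matching coefficients yields, for all $q\in\Z_s$ and $n\in\Z$,
\begin{eqnarray*}
\epsilon\alpha^n(\tfrac{\epsilon n}2-q)\,c_{G_p,G_q}=(\tfrac n2-p)\,c_{G_{n+p},G_{q+\epsilon n}}.
\end{eqnarray*}

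To finish I would specialize $n=0$, where $\sigma(L_0)=\epsilon L_0$: the identity collapses to $(p-\epsilon q)c_{G_p,G_q}=0$, and since $\epsilon=\pm1$ this forces $c_{G_p,G_q}=0$ whenever $q\ne\epsilon p$; equivalently, $\sigma$ intertwines $\ad L_0$ with $\epsilon\,\ad L_0$, so $\sigma(G_p)$ must be supported on $\ad L_0$-eigenvectors of eigenvalue $-\epsilon p$, whose $\textbf{G}$-part is exactly $\C G_{\epsilon p}$. Hence $\sigma(G_p)=c_{G_p,G_{\epsilon p}}G_{\epsilon p}+\sum_q d_{G_p,H_q}H_q$. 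The one point needing care is the separation made in the second paragraph: one must verify that neither the $I_{\epsilon n}$-summand of $\sigma(L_n)$ nor the parameter $\lambda$ feeds anything into the $\textbf{G}$-component, so that the $\textbf{G}$-equation is literally the $\lambda=0$ computation. That same interference is also why the $\textbf{H}$-part of $\sigma(G_p)$ cannot be pinned to a single index at this stage, which is why the statement carries the residual sum $\sum_q d_{G_p,H_q}H_q$ rather than a single term.
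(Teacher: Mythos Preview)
Your argument is correct and follows the same route as the paper: both push $\sigma$ through the relation $[L_n,G_p]=(\tfrac{n}{2}-p)G_{n+p}+\lambda(n+1)H_{n+p}$, invoke Lemmas \ref{aut-lemma-002} and \ref{aut-lemma-004} to isolate the $\textbf{G}$-component, and deduce that only $G_{\epsilon p}$ survives. The sole cosmetic difference is in the last step---the paper frames it as a top/bottom-level comparison for varying $n$, while you specialize to $n=0$ (where $\sigma(L_0)=\epsilon L_0$) and read off the $\ad L_0$-eigenvalue constraint $(p-\epsilon q)\,c_{G_p,G_q}=0$ directly, which is if anything the cleaner formulation.
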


\begin{lemm}\label{aut-lemma-006}
One has the following identities:
\begin{eqnarray}\label{aut-restriction of coefficients}
\begin{array}{llllll}
&&d_{H_{k+s},H_{\epsilon (k+s)}}={\alpha}^{k}d_{H_{s},H_{\epsilon s}},\vs{6pt}\\
&&c_{G_{p},G_{\epsilon p}}=\frac{1}{\mu\epsilon}d_{H_{p},H_{\epsilon p}},\ \ \
{d^{2}_{H_{s},H_{\epsilon s}}}={\alpha}^{2s}{\mu}^{3},\vs{6pt}\\
&&\lambda(\frac{1}{\mu}-1)=0,\ \ \,\lambda=0\ \ \ or\ \ \ \lambda\neq0,\ \,\mu=\epsilon=1,\vs{6pt}\\
&&d_{G_{k+s},H_{\epsilon(k+s)}}={\alpha}^{k}d_{G_{s},H_{\epsilon s}}+2k\epsilon {\alpha}^{k}{\beta}_{\epsilon}(k-2s)c_{G_{s},G_{\epsilon s}}.
\end{array}
\end{eqnarray}
\end{lemm}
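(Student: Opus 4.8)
The plan is to extract all the listed identities by feeding the structure constants of $\L$ through the automorphism condition \eqref{eq-automorphism}, using the normal forms already secured in Lemmas \ref{aut-lemma-002}, \ref{aut-lemma-004} and \ref{aut-lemma-005}. First I would apply $\sigma$ to $[L_m,H_p]=(\frac{m}{2}-p)H_{m+p}$. Using $\sigma(L_m)=\epsilon\alpha^m L_{\epsilon m}+m\alpha^m\beta_\epsilon I_{\epsilon m}$ and $\sigma(H_p)=d_{H_p,H_{\epsilon p}}H_{\epsilon p}$, the left side is $\epsilon\alpha^m(\frac{\epsilon m}{2}-\epsilon p)d_{H_p,H_{\epsilon p}}H_{\epsilon(m+p)}=\alpha^m(\frac m2-p)d_{H_p,H_{\epsilon p}}H_{\epsilon(m+p)}$ (the $I_{\epsilon m}$ term dies since $[I,H]=0$), while the right side is $(\frac m2-p)d_{H_{m+p},H_{\epsilon(m+p)}}H_{\epsilon(m+p)}$; comparing coefficients and writing $p=k+s$, $m=1$ gives the recursion yielding $d_{H_{k+s},H_{\epsilon(k+s)}}=\alpha^k d_{H_s,H_{\epsilon s}}$. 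Next, applying $\sigma$ to $[I_n,G_p]=(n-2p)H_{n+p}$ with $\sigma(I_n)=\alpha^n\mu I_{\epsilon n}$ and $\sigma(G_p)=c_{G_p,G_{\epsilon p}}G_{\epsilon p}+\sum_q d_{G_p,H_q}H_q$, the bracket $[I_{\epsilon n},G_{\epsilon p}]=(\epsilon n-2\epsilon p)H_{\epsilon(n+p)}$ picks out only the $G$-component, giving $\alpha^n\mu\,c_{G_p,G_{\epsilon p}}(\epsilon n-2\epsilon p)H_{\epsilon(n+p)}=(n-2p)d_{H_{n+p},H_{\epsilon(n+p)}}H_{\epsilon(n+p)}$; combined with the first identity this forces $c_{G_p,G_{\epsilon p}}=\frac{1}{\mu\epsilon}d_{H_p,H_{\epsilon p}}$.

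For the quadratic relation $d_{H_s,H_{\epsilon s}}^2=\alpha^{2s}\mu^3$, I would apply $\sigma$ to $[G_p,G_q]=I_{p+q}$. The $G$--$G$ bracket on the image side produces $c_{G_p,G_{\epsilon p}}c_{G_q,G_{\epsilon q}}[G_{\epsilon p},G_{\epsilon q}]=c_{G_p,G_{\epsilon p}}c_{G_q,G_{\epsilon q}}I_{\epsilon(p+q)}$ plus cross terms $[G,H]$ and $[H,H]$ which vanish, so matching with $\sigma(I_{p+q})=\alpha^{p+q}\mu I_{\epsilon(p+q)}$ gives $c_{G_p,G_{\epsilon p}}c_{G_q,G_{\epsilon q}}=\alpha^{p+q}\mu$; taking $p=q=s$ and substituting $c_{G_s,G_{\epsilon s}}=\frac{1}{\mu\epsilon}d_{H_s,H_{\epsilon s}}$ yields $\frac{1}{\mu^2}d_{H_s,H_{\epsilon s}}^2=\alpha^{2s}\mu$, i.e. $d_{H_s,H_{\epsilon s}}^2=\alpha^{2s}\mu^3$. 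The constraint $\lambda(\frac1\mu-1)=0$ (hence the dichotomy $\lambda=0$ or $\lambda\neq0,\mu=\epsilon=1$) should emerge from applying $\sigma$ to $[L_n,G_p]=(\frac n2-p)G_{n+p}+\lambda(n+1)H_{n+p}$ and isolating the $G$-coefficient: the left side contributes $\epsilon\alpha^n c_{G_p,G_{\epsilon p}}(\frac{\epsilon n}{2}-\epsilon p)G_{\epsilon(n+p)}=\alpha^n(\frac n2-p)c_{G_p,G_{\epsilon p}}G_{\epsilon(n+p)}$ while the right side's $G$-part is $(\frac n2-p)c_{G_{n+p},G_{\epsilon(n+p)}}G_{\epsilon(n+p)}+\lambda(n+1)\cdot 0$; but one must also track that $[I_{\epsilon n},G_{\epsilon p}]$ produces an $H$-term, so the full $H$-coefficient equation mixes $\lambda$, $\mu$, $\epsilon$ and the $\beta_\epsilon$ term, and comparing the $\lambda$-dependent pieces forces $\lambda\mu^{-1}=\lambda$ after using $c_{G_p,G_{\epsilon p}}=\frac1{\mu\epsilon}d_{H_p,H_{\epsilon p}}$ and $d_{H_{n+p}}=\alpha^n d_{H_p}$; when $\lambda\neq0$ this gives $\mu=1$, and a parallel parity/sign bookkeeping of the $\lambda(n+1)$ versus $\lambda(\epsilon n+1)$ terms forces $\epsilon=1$.

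Finally, the recursion for $d_{G_{k+s},H_{\epsilon(k+s)}}$ is the $H$-coefficient of $\sigma$ applied to $[L_m,G_p]=(\frac m2-p)G_{m+p}+\lambda(m+1)H_{m+p}$, now retaining the $m\alpha^m\beta_\epsilon I_{\epsilon m}$ piece of $\sigma(L_m)$: the term $[m\alpha^m\beta_\epsilon I_{\epsilon m},c_{G_p,G_{\epsilon p}}G_{\epsilon p}]=m\alpha^m\beta_\epsilon c_{G_p,G_{\epsilon p}}(\epsilon m-2\epsilon p)H_{\epsilon(m+p)}$ is exactly what generates the extra $2k\epsilon\alpha^k\beta_\epsilon(k-2s)c_{G_s,G_{\epsilon s}}$ summand once one sets $m=1$, $p=k+s$ and solves the resulting first-order recursion in $k$ (the $\lambda$-term contributes harmlessly, either vanishing or being absorbed, in the two cases above). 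I expect the main obstacle to be the $[L_n,G_p]$ computation that produces the $\lambda(\frac1\mu-1)=0$ dichotomy: it requires carefully separating the $G$-coefficient equation from the $H$-coefficient equation, correctly handling the $[I_{\epsilon n},G_{\epsilon p}]$ cross term and the sign $\epsilon$ appearing inside $(\frac{\epsilon n-1}{2}-\cdots)$-type structure constants (the bracket $[L_m,G_{k+s}]$ has coefficient depending on $s$), and checking that in the case $\epsilon=-1$ the $\lambda(n+1)\mapsto\lambda(\epsilon n+1)$ mismatch genuinely cannot be repaired by any choice of the free parameters — this sign analysis is exactly what later splits Theorem \ref{theorem-automorphism} into cases (3) and (4).
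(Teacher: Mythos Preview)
Your plan follows the paper's proof almost step for step: apply $\sigma$ to $[L_m,H_p]$, then $[I_n,G_p]$, then $[G_p,G_q]$, then $[L_n,G_p]$, in that order. The first three steps match the paper exactly. There is, however, one intermediate step you skip that the paper does not.

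At the point where you turn to $[L_n,G_p]$, Lemma \ref{aut-lemma-005} only tells you
\[
\sigma(G_p)=c_{G_p,G_{\epsilon p}}G_{\epsilon p}+\sum_q d_{G_p,H_q}H_q,
\]
with an \emph{a priori} finite sum of $H$-terms, not a single one. Your ``$H$-coefficient equation'' and the final recursion for $d_{G_{k+s},H_{\epsilon(k+s)}}$ both tacitly assume only $d_{G_p,H_{\epsilon p}}$ survives. The paper handles this by specializing to $n=0$: applying $\sigma$ to $[L_0,G_p]=-pG_p+\lambda H_p$ produces
\[
\sum_r d_{G_p,H_r}(\epsilon r-p)H_r=\lambda\Bigl(\tfrac{1}{\mu}-1\Bigr)d_{H_p,H_{\epsilon p}}H_{\epsilon p},
\]
and a top/bottom level comparison simultaneously forces $d_{G_p,H_r}=0$ for $r\neq\epsilon p$ and $\lambda(\tfrac{1}{\mu}-1)=0$. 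Your phrase ``comparing the $\lambda$-dependent pieces'' is not quite a valid manoeuvre ($\lambda$ is a fixed scalar, not a formal variable), whereas the paper's level argument is. Once the $H$-sum is collapsed, the paper then specializes $[L_m,G_p]$ to $m=2p$ to obtain $\lambda\bigl(2(1-\mu\epsilon)p+\epsilon-\mu\epsilon\bigr)=0$ for all $p$, which for $\lambda\neq0$ forces $\mu\epsilon=1$ and $\epsilon=1$; this is cleaner than your proposed ``sign bookkeeping of $\lambda(n+1)$ versus $\lambda(\epsilon n+1)$''. For the last identity the paper simply sets $p=s$, $m=k$ directly (no iteration from $m=1$ is needed, since the $\lambda$-terms cancel in both cases of the dichotomy).
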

\begin{proof}
Using $[L_{m},H_{p}]=(\frac{m}{2}-p)H_{m+p}$, we obtain
\begin{equation*}
{\alpha}^{m}d_{H_{p},H_{\epsilon p}}=d_{H_{m+p},H_{\epsilon(m+p)}},
\end{equation*}
which implies
\begin{eqnarray}\label{aut-lemma-005-eq-001}
d_{H_{k+s},H_{\epsilon (k+s)}}={\alpha}^{k}d_{H_{s},H_{\epsilon s}},\ \ \forall\,\,k\in\Z.
\end{eqnarray}
Using $[I_{m},G_{p}]=(m-2p)H_{m+p}$, one has
\begin{equation*}
{\alpha}^{m}\mu\epsilon c_{G_{p},G_{\epsilon p}}=d_{H_{m+p},H_{\epsilon(m+p)}},
\end{equation*}
from which we can deduce
\begin{eqnarray}\label{aut-lemma-005-eq-002}
c_{G_{p},G_{\epsilon p}}=\frac{1}{\mu\epsilon}d_{H_{p},H_{\epsilon p}}.
\end{eqnarray}
According to $[G_{p},G_{q}]=I_{p+q}$, one has
\begin{equation*}
c_{G_{p},G_{\epsilon p}}c_{G_{q},G_{\epsilon q}}={\alpha}^{p+q}\mu,
\end{equation*}
which together with \eqref{aut-lemma-005-eq-002} gives
\begin{eqnarray}\label{aut-lemma-005-eq-003}
{d_{H_{s},H_{\epsilon s}}}^{2}={\alpha}^{2s}{\mu}^{3}.
\end{eqnarray}

Acting $\sigma$ on both sides of $[L_{0},G_{p}]=-pG_{p}+\lambda H_{p}$, we obtain
\begin{equation*}
\sum_{r=b}^{t}d_{G_{p},H_{r}}(\epsilon r-p)H_{r}=\lambda(\frac{1}{\mu}-1)d_{H_{p},H_{\epsilon p}}H_{\epsilon p}.
\end{equation*}
Compare the top and bottom level of both sides, one can deduce
\begin{eqnarray}\label{aut-lemma-005-eq-004}
&&\lambda(\frac{1}{\mu}-1)=0,\ \ \
\sigma(G_{p})=c_{G_{p},G_{\epsilon p}}G_{\epsilon p}+d_{G_{p},H_{\epsilon p}}H_{\epsilon p}.
\end{eqnarray}
Using $[L_{m},G_{p}]=(\frac{m}{2}-p)G_{m+p}+\lambda(m+1)H_{m+p}$, one has
\begin{eqnarray}\label{aut-lemma-005-eq-005}
\begin{array}{lll}
&&(\frac{m}{2}-p)d_{G_{m+p},H_{\epsilon(m+p)}}
+\lambda(m+1)d_{H_{m+p},H_{\epsilon(m+p)}}\vs{6pt}\\
&&={\alpha}^{m}\lambda(m+\epsilon)c_{G_{p},G_{\epsilon p}}+{\alpha}^{m}(\frac{m}{2}-p)d_{G_{p},H_{\epsilon p}}
+\epsilon {\alpha}^{m}m{\beta}_{\epsilon}(m-2p)c_{G_{p},G_{\epsilon p}}.
\end{array}
\end{eqnarray}
Taking $m=2p$ in \eqref{aut-lemma-005-eq-005}, one has
\begin{equation*}
\lambda(2(1-\mu\epsilon)p+\epsilon-\mu\epsilon)=0,\ \ \forall\,\,p\in{\Z_s},
\end{equation*}
which implies
\begin{eqnarray}\label{aut-lemma-005-eq-006}
&&\lambda=0\ \ \
or\ \ \ \lambda\neq0,\ \ \mu=\epsilon=1.
\end{eqnarray}
According to \eqref{aut-lemma-005-eq-006}, \eqref{aut-lemma-005-eq-005} becomes
\begin{equation*}
d_{G_{m+p},H_{\epsilon(m+p)}}={\alpha}^{m}d_{G_{p},H_{\epsilon p}}+2m\epsilon {\alpha}^{m}{\beta}_{\epsilon}(m-2p)c_{G_{p},G_{\epsilon p}},
\end{equation*}
from which we can deduce $d_{G_{k+s},H_{\epsilon(k+s)}}={\alpha}^{k}d_{G_{s},H_{\epsilon s}}+2k\epsilon {\alpha}^{k}{\beta}_{\epsilon}(k-2s)c_{G_{s},G_{\epsilon s}}$.
\end{proof}

\noindent{\it Proof of Theorem \ref{theorem-automorphism}} (1) For any $\sigma\in\Aut({\L})$, we know $\sigma(\ad I_{k}){\sigma}^{-1}=\ad\sigma(I_{k})$. According to Lemma \ref{aut-lemma-001}, we know that $\mathfrak{I}$ is an abelian normal subgroup of $\Aut({\L})$. And Theorem \ref{theorem-automorphism} (2)--(4) follows from Lemmas \ref{aut-lemma-002}--\ref{aut-lemma-006} immediately.\hfill$\Box$


\begin{thebibliography}{9999}
\vskip5pt\small\parindent=8ex\parskip=2pt\baselineskip=2pt

\bibitem{BM} G. M. Benkart, R. V. Moody, Derivations, central extensions and affine Lie algebras, {\it Alg. Groups Geom.}, {\bf 3} (1986), 456--492.

\bibitem{CL} H. Chen, J. Li, Left-symmetric algebra structures on the $W$-algebra $W(2,2)$, {\it Linear Alg. Appl.}, {\bf 437} (2012), 1821--1834.

\bibitem{DZ} D. Dokovi${\rm\acute{c}}$, K. Zhao, Derivations, isomorphisms, and second cohomology of generalized Witt algebras, {\it Tran. Ams. Math. Soc.}, {\bf 350} (1998), 643--664.

\bibitem{F01} R. Farnsteiner, Derivations and extensions of finitely generated graded Lie algebras, {\it J. Algebra}, {\bf 118} (1988), 34--45.

\bibitem{F02} R. Farnsteiner, Cohomology groups of infinite dimensional algebras, {\it Math. Z.}, {\bf 199} (1988), 407--423.

\bibitem{FG} J. Fu, Y. Gao, Derivation algebra and automorphism group of generalized Ramond $N\!=\!2$ superconformal algebra, {\it Front. Math. China}, {\bf 4} (2009), 637--650.

\bibitem{GJP} S. Gao, C. Jiang, Y. Pei, Low dimensional cohomology groups of the Lie algebras $W(a,b)$, {\it Commun. Alg.}, {\bf 39} (2011), 397--423.

\bibitem{JP} W. Jiang, Y. Pei. On the structure of Verma modules over the $W$-algebra $W(2,2)$, {\it J. Math. Phys.}, {\bf 51} (2010), 022303, 8 pp.

\bibitem{K} V.G. Kac, Lie superalgebras, {\it Adv. Math.}, {\bf 26} (1977), 8--97.

\bibitem{LGZ} D. Liu, S. Gao, L. Zhu, Classification of irreducible weight modules over $W$-algebra $W(2,2)$, {\it J. Math. Phys.}, {\bf 49} (2008), 113503, 6 pp.

\bibitem{LSX} J. Li, Y. Su, B. Xin, Lie bialgebra structures on the centerless $W$-algebra $W(2,2)$, {\it Alg. Colloq.}, {\bf 17} (2010), 181--190.

\bibitem{SJ} R. Shen, C. Jiang, The derivation algebra and automorphism group of the twisted Heisenberg-Virasoro algebra, {\it Comm. Alg.}, {\bf 34} (2006), 2547--2558.

\bibitem{SZ} Y. Su, K. Zhao, Second cohomology group of generalized Witt type Lie algebras and certain reperesentations, {\it Comm. Alg.}, {\bf 30} (2002), 3285--3309.

\bibitem{WXY}H. Wang, Y. Xu, X. Yue, Lie bialgebra stractures on not-finitely graded Lie algebras $B\,(\Gamma)$ of Block type, {\it Journal of Lie Theory}, \textbf{25} (2015), 775--786.

\bibitem{WLX} W. Wang, J. Li, Y. Xu, Derivations and automorphisms of twisted deformative Schr\"{o}dinger-Virasoro Lie algebras, {\it Commun. Alg.}, {\bf 40} (2012), 3365--3388.

\bibitem{WCB}Y. Wang, Z. Chen, C. Bai, Classification of Balinsky-Novikov superalgebras with dimension $2|2$, {\it J. Phys. A: math. Theor.} \textbf{45} (2012), 225201.

\bibitem{W} C. Weibel, An Introduction to Homological Algebra, {\it Cambridge Studies in Advanced Mathematics, Cambridge University Press}, (1994).

\bibitem{ZD} W. Zhang, C. Dong, $W$-algebra $W(2,2)$ and the Vertex operator algebra $L(\frac{1}{2},0)\otimes L(\frac{1}{2},0)$, {\it Commun. Math. Phys.}, {\bf 285} (2008), 991--1004.

\bibitem{ZT} X. Zhang, S. Tan, $\theta$-unitary representations for the $W$-algebra $W(2,2)$, {\it Linear Multilinear Alg.}, {\bf 60} (2012), 533--543.

\end{thebibliography}
\end{document}